\numberwithin{equation}{section}
\theoremstyle{definition}
\newtheorem{theorem}{Theorem}
\newtheorem{mydef}[theorem]{Definition}
\newtheorem{corollary}[theorem]{Corollary}
\newtheorem{example}[theorem]{Example}
\newtheorem{lemma}[theorem]{Lemma}
\newtheorem{proposition}[theorem]{Proposition}
\newcommand{\Ito}{It\^{o}}
\newcommand{\Holder}{H\"{o}lder}
\newcommand{\thesistitle}{Fractional Brownian Motion and the Fractional Stochastic Calculus}
\newcommand{\thesisauthor}{Ben McGonegal}
\newcommand{\thesisadvisor}{Advisor: Professor Tom LaGatta}
\newcommand{\graddate}{May/2013}
\begin{document}

\pagenumbering{arabic}

\thispagestyle{empty}
 
\begin{center}

  {\large\textbf{\thesistitle}}
  \vspace{.2in}

 \thesisauthor
 
 \vspace{.6in}
 \begin{doublespace}
 
 New York University, Courant Institute of Mathematical Sciences\\
 251 Mercer Street\\
 New York, NY  10012\\
  \graddate
  \end{doublespace}
  
  \vspace{.6in}

  A thesis submitted in partial fulfillment\\
  of the requirements for the degree of\\
  master's of science\\
  Department of Mathematics\\
  New York University\\
  \graddate

\end{center}
\vfill

\noindent\makebox[\textwidth]{\hfill\makebox[2.5in]{\hrulefill}}\\
\makebox[\textwidth]{\hfill\makebox[2.5in]{\hfill\thesisadvisor\hfill}}
\newpage

\tableofcontents

\newpage

\begin{doublespace}

\section{Introduction}\label{intro}

	In recent years, interest in fractional Brownian motion (fBm) has grown mostly due to applications in such fields as hydrology, economics, telecommunications and finance. Embrechts and Maejima in [13] and Dai and Heyde in [28] both explain that due to this popularity, demand for a stochastic calculus with respect to fractional Brownian motion has increased. Fractional Brownian motion, also known as a fractal Brownian motion, is comparable to a continuous fractal random walk. Though, unlike regular Brownian motion, fBm has dependent increments, which means that the current "step" of a fBm is dependent on previous "steps." This dependence is measured on a scale from zero to one and this measure is called the Hurst index, $H \in (0,1),$ named after hydrologist Harold Edwin Hurst for his work in the field of hydrology.  
	
	Hurst studied the yearly variance in levels of the Nile river and applied this to the so called $R/S$ statistic, where $R$ is the range of partial sums of the data and $S$ is the sample standard deviation.  The $R/S$ statistic should grow like $n^{1/2}$ under normal assumptions of independent and identically distributed observations and finite variance, where $n$ is the sample size. Interestingly enough, the Nile data indicated growth of $n^H$, where $H \in (1/2, 1)$.  Random walk typically yields a growth of $n^{1/2}$, and the scaling limit of random walk in dimension one is Brownian motion. Hence, it must be the case that the growth $n^H$, with $H \in (1/2, 1)$ corresponds to something else [31].  Mandelbrot noticed that while Brownian motion has standard deviation $t^{1/2}$, fractional Brownian motion has a standard deviation of $t^H$, where $0 < H < 1$, and thus fBm might be a more appropriate fit for this behavior [32]. 

	The Hurst index describes the raggedness of the path of the fBm it is associated with, where a value of $1/2$ corresponds to non-correlating increments.  A value greater than $1/2$ correspond to positive correlation. Heuristically speaking, if a process corresponding to a value of $H$ that falls within this range is going up during an interval, then it will likely keep going up in the next interval.  On the other hand, values less than $1/2$ correspond to negative correlation.  Furthermore, if a process that has positively correlated increments has upward growth in an interval, then it will likely move down in the next interval.  A useful application of fBm for values of $H \in (1/2, 1)$ is in describing the behavior to prices of assets and volatilities in stock markets [23].  Fractional Brownian motion, written $B^H (t),$ is a generalization of Brownian motion, which is a fBm with Hurst index $H = 1/2.$  It turns out fractional Brownian motions divide into these three very different cases, corresponding to the interval $H$ is associated to. For this reason, we present these cases separately. First, the classical case of Brownian motion when $H = 1/2$ and then the case when $H \neq 1/2$. 
	
		In 1940, it was Andrei Kolmogorov, while studying spiral curves in Hilbert space, who first introduced fractional Brownian motion.  However, it wasn't until Mandelbrot recognized fBm's significance that he, together with Van Ness, derived many of its important properties in their famous paper [29] in 1968. It was in that paper that fractional Brownian motion was given its name, which comes from its representation as a fractional stochastic integral with respect to Brownian motion. An integral is called stochastic when either the integrator, integrand, or both are stochastic processes, thus making itself a random process as well. A fractional integral is one where we take an $\alpha$-tuple iterated integral, where $\alpha$ need not be an integer. As we shall show later, a fBm is nothing more than a ($H - 1/2$)-tuple iterated integral of a  regular Brownian motion.  In fact, in [29] Mandelbrot and Van Ness describe $B^H(t)$, for values of $H \neq 1/2$, as being the "fractional derivative or integral of $B^{\frac{1}{2}}(t)$."  In Section \ref{fraccalc}, we present the fractional calculus and discuss some of its definitions. We do this because of its strong relationship and importance to fractional Brownian motion.  It is interesting to note that a fBm, $B^H(t),$ can be written (as we will show) in terms of the fractional calculus operators defined, and hence we can take full advantage of the properties inherent within the fractional calculus in order to show some properties that fBm exhibits.
		
	We begin in Section \ref{fraccalc} by introducing the fractional calculus, from the Riemann-Liouville perspective. In Section \ref{BM}, we introduce Brownian motion and its properties, which is the framework for deriving the {\Ito} integral. In Section \ref{itogral} we finally introduce the {\Ito} calculus and discuss the derivation of the {\Ito} integral. Section \ref{itormula} continues the discussion about the {\Ito} calculus by introducing the {\Ito} formula, which is the analogue to the chain rule in classical calculus. In Section \ref{FBM} we present our formal definition of fBm and derive some of its properties that give motivation for the development of a stochastic calculus with respect to fBm. Finally, in Section \ref{fbmstochcalc} we define and characterize a stochastic integral with respect to fBm from a pathwise perspective.

\pagebreak

\section{Fractional Calculus}\label{fraccalc}

Fractional calculus is a branch of mathematical analysis that unifies the integration operator and differentiation operator of classical calculus as one operator, the differintegral. The differintegral is a single operator depending on a real valued parameter $\alpha,$ where positive values of $\alpha$ correspond to differentiation and negative values of $\alpha$ correspond to integration.  Fractional calculus is an extension, or generalization, of the well known classical calculus. It gets its name from the idea that instead of taking integer order derivatives and integrals, what happens when we take fractional (or any real number) orders of differintegrals. As we will see, when we take integer ordered derivatives and integrals of the well defined fractional calculus, we get just that, the classical calculus first derived by Leibniz and Newton. As mentioned above, our interest in fractional calculus within the framework of this paper is due to the fact that fBm can be represented by a fractional stochastic integral.  We first begin by introducing the fractional integral, derived from a $n$-tuple [integer ordered] iterated integral, and express it as a single integral dependent on the parameter $n.$

Let $f$ be some function on the interval $[a,b]$, then a multiple integral of $f$ can be expressed by Cauchy's formula for repeated integration:

$$\int^{x_n}_a \int^{x_{n-1}}_a \cdots \int^{x_1}_a f(u) du dx_1 \cdots dx_{n-1} $$
\begin{equation}
= \frac{1} {(n-1)!} \int^{x_n}_a f(u) (x_n - u)^{n-1} du,
\label{cauchy}
\end{equation}
where $x_n \in [a,b]$ and $n \in \mathbb{Z}^{+}.$ This shows nicely the correspondence between the number of integrals we are integrating over the integrand with on the left side of \eqref{cauchy} with the number in the denominator and the exponent in the integrand on the right side of \eqref{cauchy}. Expressing the repeated integral as in \eqref{cauchy} gives us the framework to be able to integrate (or differentiate) a function a fraction amount of times.  For example, if we wanted to integrate a function one and a half times, we would simply write down $1.5$ everywhere we see $n$, but this causes a problem. Notice we are computing a factorial dependent on $n$, which is only defined for positive integers, yet we are interested in taking factorials of any real number (as in the example we just mentioned of  3/2). We will remedy this problem after we give the proof of \eqref{cauchy}, of which we will walk through because it is insightful.  We will also revisit some of the ideas in Sections \ref{FBM} and \ref{fbmstochcalc}.

\begin{proof}
A proof is given by induction.  Consider the base case, where $n=2$. Then,

$$\int^{x_2}_a \int^{x_1}_a f(u) du dx_1 = \int^{x_2}_a f(u) \int^{x_2}_u dx_1 du = \int^{x_2}_a f(u) (x_2 - u) du $$
by changing the order of integration. Now suppose this is true for $n-1$, then by changing the order of integration\\

$\displaystyle \int^{x_n}_a \int^{x_{n-1}}_a \cdots \int^{x_1}_a f(u) du dx_1 \cdots dx_{n-1}$

\begin{align*}
 & = \frac{1} {(n-2)!} \int^{x_n}_a \int^{x_{n-1}}_a f(u) (x_{n-1} - u)^{n-2} du dx_{n-1}\\
 & = \frac{1} {(n-2)!} \int^{x_n}_a  f(u) \int^{x_{n-1}}_u (x_{n-1} - u)^{n-2} dx_{n-1} du\\
 & = \frac{1} {(n-1)!} \int^{x_n}_a f(u) (x_n - u)^{n-1} du.
 \end{align*}
 Hence by induction, \eqref{cauchy} is true.
 \end{proof}

By observing that $(n-1)! = \Gamma(n)$, where $\Gamma$ is the gamma function, and replacing $n$ in \eqref{cauchy} with $\alpha \in \mathbb{R}^{+},$ we can define the Riemann-Liouville fractional integrals (a detailed derivation can be found in [3]). This structure allows us take an $\alpha$-tuple iterated integral of a function $f,$ for any real valued $\alpha,$ hence the name {\it{fractional}} integral (though this is a bit misleading, as we will define this integral for not only real numbers that can be expressed as fractions, but those that can not as well!).

\begin{mydef}
Let $f \in L^1[a,b]$, $\alpha \in \mathbb{R}^{+}$ and $t \in (a,b)$. The fractional integrals of order $\alpha$ on the intergal $[a,b]$ are
\begin{equation}
 \label{Ia+}
 (I^\alpha_{a+} f)(t) = \frac{1} {\Gamma(\alpha)} \int^{b}_a f(u) (t - u)^{\alpha -1}_+ du
\end{equation}
and
\begin{equation}
 \label{Ib-}
 (I^\alpha_{b-} f)(t) = \frac{1} {\Gamma(\alpha)} \int^{b}_a f(u) (u - t)^{\alpha -1}_+ du.
 \end{equation}
\end{mydef}

\vspace{10 mm}
Notice that in \eqref{Ia+}, $(t - u)^{\alpha -1}_+ = 0$ when $u \geq t$, thus $I^\alpha_{a+}$ can be written as 
\begin{equation}
\label{Ia}
\frac{1} {\Gamma(\alpha)} \int^{t}_a f(u) (t - u)^{\alpha -1} du.
\end{equation}

Similarly, \eqref{Ib-} can be written as

\begin{equation}
\label{Ib}
\frac{1} {\Gamma(\alpha)} \int^{b}_t f(u) (u - t)^{\alpha -1} du.
\end{equation}	

For this reason $I^\alpha_{a+}$ is called left sided, because the interval of integration, $[a, t]$, of \eqref{Ia} is over the left side of the interval $[a, b]$. Similarly, $I^\alpha_{b-}$ is called right sided.

When deriving fractional integrals, we restricted the order of itegration, $\alpha \in \mathbb{R}$ to be strictly positive. Now, to obtain fractional derivatives, we consider the order $\alpha$ to be negative. However, \eqref{Ia} and \eqref{Ib} diverge if we replace $\alpha \in \mathbb{R}^{+} $ with $\alpha \in \mathbb{R}^{-}$. So, if we restrict $0 < \alpha < 1$, we can define the Riemann-Liouville fractional derivatives of order $\alpha$.

\begin{mydef}
Let $f \in L^1[a,b]$, $0 < \alpha < 1$ and $t \in (a,b)$. The fractional derivatives of order $\alpha$ on the interval $[a,b]$ are
\begin{equation}
 (D^\alpha_{a+} f)(t) = \frac{1} {\Gamma(1-\alpha)} \frac{d}{du}\int^{b}_a f(u) (u - t)^{-\alpha}_+ du
\end{equation}
and
\begin{equation}
 (D^\alpha_{b-} f)(t) = \frac{1} {\Gamma(1-\alpha)}  \frac{d}{du}\int^{b}_a f(u) (t - u)^{-\alpha}_+ du.
 \end{equation}
\end{mydef}

\vspace{10 mm}

Furthermore, this case admits what is known as the Weyl representation of the fractional derivatives:

\begin{equation}
 (D^\alpha_{a+} f)(t) = \frac{1} {\Gamma(1-\alpha)} \Bigg[ \frac{f(t)}{(t-a)^{\alpha}} + \alpha \int^{t}_a \frac{f(t) - f(u)}{(t-u)^{\alpha -1}} du \Bigg]
\end{equation}
and
\begin{equation}
(D^\alpha_{b-} f)(t) = \frac{1} {\Gamma(1-\alpha)} \Bigg[ \frac{f(t)}{(b-t)^{\alpha}} + \alpha \int^{b}_t \frac{f(t) - f(u)}{(u-t)^{\alpha -1}} du \Bigg].
 \end{equation}

\vspace{5mm}

Notice that $D^\alpha_{a+} = I^{-\alpha}_{a+}$ and $D^\alpha_{b-} = I^{-\alpha}_{b-}$ (which are easily derived from \eqref{Ia} and \eqref{Ib} in [6]). Furthermore, the fractional derivatives $D^\alpha_{a+}$ and $D^\alpha_{b-}$ are called left sided and right sided, respectively.

\begin{mydef}
Let $\alpha \in (0,1)$ and $f$ a function over $\mathbb{R}.$ Then 
$$(I^\alpha_{-} f)(t) = \frac{1} {\Gamma(\alpha)} \int^{\infty}_{- \infty} f(u) (t - u)^{\alpha -1}_+ du$$
and
$$(I^\alpha_{+} f)(t) = \frac{1} {\Gamma(\alpha)} \int^{\infty}_{- \infty} f(u) (t - u)^{\alpha -1}_- du$$
are called the left-sided and right-sided fractional integrals of order $\alpha$ on the whole real line
\end{mydef}

\begin{example}
Let $f(t) = (t-a)^{\alpha - 1}$, $t \in (a, b)$ and $0 < \alpha < 1.$ Then
\begin{align*}
(D^\alpha_{a+} f)(u) 
& = \frac{1} {\Gamma(1-\alpha)} \frac{d}{du}\int^{u}_a (t-a)^{\alpha - 1} (u - t)^{-\alpha} dt\\
& = \Gamma(\alpha) \frac{d}{du} 1 = 0.
\end{align*}
\end{example}

\subsection{Properties of Fractional Integrals and Derivatives}\label{fraccalcprop}

We now present some properties of interest of the well defined fractional calculus.

\begin{enumerate}
	\item Reflection: Let $R$ be defined by $(Rf)(t) = f(a + b - t)$ for $t \in [a,b]$ and $\alpha \in 	\mathbb{R}^{+}$, then
	$$RI^\alpha_{a+} = I^\alpha_{b-} R$$
	  and  
	  $$RI^\alpha_{b-} = I^\alpha_{a+} R.$$
	R is known as the reflection operator.
	\item Composition: Let $f \in L^1[a,b]$ and $\alpha, \beta >0$, then
	$$I^\alpha_{a+}I^\beta_{a+} f = I^{\alpha + \beta}_{a+}, I^\alpha_{b-}I^\beta_{b-} f = I^{\alpha + \beta}_{b-}, $$
	and
	$$D^\alpha_{a+}D^\beta_{a+} f = D^{\alpha + \beta}_{a+}, D^\alpha_{b-}D^\beta_{b-} f = D^{\alpha + \beta}_{b-}.$$
	\item Integration by parts: Let $f \in L^p[a,b], g \in L^q[a,b],$  with either $\frac{1}{p} + \frac{1}{q} \leq 1 + \alpha,$ where $p >$1 and $q > 1$ or $\alpha \geq 1, p=1, q=1$, then
	
	\begin{equation}
	\label{fracibp}
	 \int^{b}_a f(u) (I^\alpha_{b-}g)(u) du =  \int^{b}_a (I^\alpha_{a+}f)(u) g(u)du.
	\end{equation}
	
	\item Identity: Let $0 < \alpha < 1$, then for any $f \in L^p[a,b],$
	\begin{equation*}
	D^\alpha_{a+} I^\alpha_{a+} f = f,
	\end{equation*}
	and for any $g$ such that $g = I^\alpha_{a+} f,$
	\begin{equation}
	I^\alpha_{a+} D^\alpha_{a+} f = f.
	\end{equation}
\end{enumerate}

The next definition introduces the ${\it{fractal}}$ ${\it{integral.}},$ which we will revisit in Section \ref{fbmstochcalc}.

\begin{mydef}\label{fractegral}
Let $f_{a+} \in I^{\alpha}_{a+}(L^p(a,b)),$ $g_{b-} \in I^{1 - \alpha}_{b-}(L^q(a,b)),$ $1/p + 1/q \leq 1,$ $0 \leq \alpha \leq 1$ and $\alpha p < 1,$ where 
$$f_{a+}  = I_{(a,b)}[f(x) - f(a+)],$$
$$f_{b-}  = I_{(a,b)}[f(x) - f(b-)],$$
provided that $f(a+) = \displaystyle \lim_{x \rightarrow a+} f(x)$ and $f(b-) = \displaystyle \lim_{x \rightarrow b-} f(x)$ exist. Then
\begin{enumerate}
	\item If $g(a+)$ exists and $f \in C^{\alpha - 1/p},$ we define the fractal integral 
	$$\int^b_a f dg = \int^b_a D^{\alpha}_{a+} f_{a+}(x) D^{1- \alpha}_{b-} g_{b-}(x) dx + f(a+)[g(b-) - g(a+)].$$
	\item If $g \in C^{1 - \alpha - 1/q},$ we define the fractal integral 
	$$\int^b_a f dg = \int^b_a D^{\alpha}_{a+} f_{a+}(x) D^{1- \alpha}_{b-} g_{b-}(x) dx.$$
\end{enumerate}
\end{mydef}

	It is important to remark that the fractional integrals are well defined for functions $f \in L^p [a,b],$ $p \geq 1,$ and forms a group of operators as well. Clearly, for any values $\alpha, \beta \in \mathbb{R},$ we get that $I^0_{\cdot} f = f$ and $(I^{\alpha}_{\cdot} \circ I^{\beta}_{\cdot}) f = I^{\alpha + \beta}_{\cdot} f$ (assuming of course the limits of integration are equivalent). Moreover, it is clear to see that as the definition of the fractional derivative implies,  when $\alpha = 1,$ we get the case where $D^1_{\cdot} f = \frac{d}{dx} f,$ hence $D^{\alpha}_{\cdot} f = \frac{d}{dx} I^{1 - \alpha}_{\cdot} f.$

\section{Brownian Motion}\label{BM}

Now that we have introduced fractional calculus, our next aim is to discuss Brownian motion, denoted as $B(t),$ and the {\Ito} integral. Brownian motion gets its name from the Scottish botanist Robert Brown, who noticed while looking through a microscope in 1827 that tiny particles seemed to move randomly through water. It wasn't until Albert Einstein published a paper in 1905 that this random movement of particles suspended in a fluid was explained. Whereas Brownian motion is the actual physical motion of these particles, the Wiener process is the mathematical interpretation of this process. Thus, the Wiener process is synonymous with the standard Brownian motion, the case when $B(0) = 0.$

	As we are building a discussion about fractional Brownian motion and how it is related to fractional calculus, recall that Brownian motion is merely the special case of fBm corresponding to $H = 1/2.$ This classical form of fBm is the only case that exhibits independent increments, hence, when $H = 1/2,$ the correlation of two distinct increments is zero. This separates the generalized fractional Brownian motion into two more categories, the case when $H < 1/2$ and the case when $H > 1/2.$ In later sections, we will discuss in detail the differences between these two cases, but in this section, we focus on the case when $H = 1/2.$ We begin with a formal defintion.

\begin{mydef}
A Brownian motion starting at $a \in \mathbb{R}$ is a real valued stochastic process $\{B(t):t \geq 0\}$ with the following properties:
\begin{enumerate}
	\item $B(0) = a$
	\item $B(t)$ has independent increments
	\item for all $t \geq 0$ and $h > 0,$ $B(t+h) - B(t)$ is normally distributed with expectation zero and variance $h$
	\item $B(t)$ has continuous trajectories almost surely, i.e. $B(t)$ is a continuous function.
\end{enumerate}
Note that if $a = 0$, $B(t)$ is called a standard Brownian motion.
\end{mydef}

Covariance of Brownian motion: $E[B(t)B(s)] = min(s,t).$ The proof follows from property $2$ and $3$ in the definition above. Without loss of generality, let $0 \leq s \leq t,$ then
\begin{align*}
E[B(s)B(t)]  
&= E[B(s)(B(t) - B(s) + B(s))] \\
&=  E[B(s)(B(t) - B(s))] + E[(B(s))^2].\\
\end{align*}
But, by independence of increments and the fact that $E[B(t)] = 0$ for all $t \geq 0$ we have
$$E[B(s)(B(t) - B(s))] =  E[B(s)]E[B(t) - B(s)] = 0.$$
Moreover, since the variance of $B(t)$ is zero for all $t \geq 0$, $E[B(s)^2] = s$, and hence
\begin{align}
E[B(t)B(s)] = min(s,t).
\end{align}

The following two lemmas introduce the self similarity property of a standard Brownian motion, and thus the fractal characteristics it has. Recall that a gaussian distribution is strictly characterized by its mean and covariance, hence, take note that in the following two lemmas, $B(t)$ and $X(t)$ both have mean zero and variance $t.$ 

\begin{lemma}
 If $B(t)$ is a standard Brownian motion, then $X(t) = \frac{1}{\sqrt{\lambda}} B(\lambda t)$ is a Brownian motion for any $\lambda > 0$ and $t \geq 0.$
\end{lemma}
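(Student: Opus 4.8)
The plan is to verify directly that $X(t) = \frac{1}{\sqrt{\lambda}} B(\lambda t)$ satisfies the four defining properties of a standard Brownian motion listed in the definition above. Each property of $X$ will be inherited from the corresponding property of $B$ under the deterministic time-rescaling $t \mapsto \lambda t$ together with the amplitude scaling by $1/\sqrt{\lambda}$, so the argument reduces to checking that these two operations interact correctly with the normal law of the increments.

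First I would check the initial condition: $X(0) = \frac{1}{\sqrt{\lambda}} B(\lambda \cdot 0) = \frac{1}{\sqrt{\lambda}} B(0) = 0$, so $X$ starts at the origin and is therefore a candidate for a \emph{standard} Brownian motion. Next, for independence of increments, I would observe that for $t \geq 0$ and $h > 0$,
\[
X(t+h) - X(t) = \frac{1}{\sqrt{\lambda}}\bigl(B(\lambda t + \lambda h) - B(\lambda t)\bigr),
\]
so that increments of $X$ over disjoint time intervals correspond to increments of $B$ over disjoint (rescaled) time intervals; since $B$ has independent increments by property 2, so does $X$. Then, for the distribution of an increment, I would invoke property 3 of $B$: the quantity $B(\lambda t + \lambda h) - B(\lambda t)$ is normal with mean zero and variance $\lambda h$. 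Multiplying a normal random variable by the constant $1/\sqrt{\lambda}$ again yields a normal random variable, now with mean zero and variance $\frac{1}{\lambda}\cdot \lambda h = h$, exactly as required. Finally, continuity of the trajectories of $X$ is immediate, since $t \mapsto \lambda t$ is continuous and $B$ has continuous paths almost surely, so the composition scaled by a constant is continuous almost surely.

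The main obstacle, such as it is, is the variance bookkeeping in the increment step: one must track the factor of $\lambda$ introduced by the time rescaling against the factor of $1/\lambda$ produced by squaring the amplitude scaling, and confirm that they cancel to leave variance exactly $h$ rather than $\lambda h$ or $h/\lambda$. As the remark preceding the lemma suggests, an equivalent route is to argue that $X$ is a Gaussian process and compute its covariance directly:
\[
E[X(s)X(t)] = \frac{1}{\lambda}\, E[B(\lambda s) B(\lambda t)] = \frac{1}{\lambda}\min(\lambda s, \lambda t) = \min(s,t),
\]
using the covariance formula $E[B(s)B(t)] = \min(s,t)$ derived above together with $\min(\lambda s,\lambda t) = \lambda\min(s,t)$ for $\lambda > 0$. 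Since a mean-zero Gaussian process with covariance $\min(s,t)$ and continuous paths is a standard Brownian motion, this second approach yields the result as well, and I would mention it as the conceptual counterpart to the property-by-property verification.
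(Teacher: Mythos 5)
Your proposal is correct, but your primary route differs from the paper's. You verify the four defining axioms one by one: $X(0)=0$, independence of increments via the bijection between intervals $[t,t+h]$ and $[\lambda t,\lambda t+\lambda h]$, normality of increments with the variance bookkeeping $\frac{1}{\lambda}\cdot\lambda h=h$, and almost-sure path continuity. The paper instead uses the Gaussian-process characterization exclusively: it asserts $X$ is Gaussian, notes $E[X(t)]=0$, and computes $E[X(t)X(s)]=\frac{1}{\lambda}\min(\lambda t,\lambda s)=s$ for $s<t$ --- i.e., exactly the covariance argument you offer as your ``conceptual counterpart'' at the end. Your axiom-by-axiom check is more elementary and self-contained: it does not lean on the fact that a mean-zero Gaussian process with covariance $\min(s,t)$ and continuous paths is a Brownian motion, and it avoids the subtle point the paper glosses over, namely that one must check the finite-dimensional distributions of $X$ are \emph{jointly} Gaussian (not merely that each $X(t)$ is Gaussian); in your version joint Gaussianity is never needed because independence and normality of increments are inherited directly. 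The paper's approach is shorter and generalizes more readily to processes defined only through their covariance (such as the fractional Brownian motion treated later), which is presumably why the author adopts it here as a warm-up. Either argument is complete and acceptable.
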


\begin{proof}
For $X(t)$ to be a Brownian motion, it needs to be a Gaussian process and have the same mean and covariance structure as that of a Brownian motion. First, observe that $X(t)$ has a Gaussian distribution for any fixed $t \geq 0,$ since   $X(t) = \frac{1}{\sqrt{\lambda}} B(\lambda t).$ Secondly,
$$E[X(t)] = E[\frac{1}{\sqrt{\lambda}} B(\lambda t)] = \frac{1}{\sqrt{\lambda}} E[B(\lambda t)] = 0.$$
Moreover, for $0 < s < t$
\begin{align*}
E[X(t)X(s)] 
& = E[\frac{1}{\sqrt{\lambda}} B(\lambda t)\frac{1}{\sqrt{\lambda}} B(\lambda s)]\\
& =  \frac{1}{\lambda} E[B(\lambda t)B(\lambda s)]\\
& =  \frac{1}{\lambda} min(\lambda t, \lambda s) \\
& =  \frac{1}{\lambda} \cdot \lambda s = s.\\
\end{align*}
And thus, $X(t)$ is a Brownian motion.
\end{proof}

\begin{lemma}
If $B(t)$ is a standard Brownian motion, then so is the process 
\end{lemma}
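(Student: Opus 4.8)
The plan is to verify that the time-inverted process $X(t) = tB(1/t)$ for $t > 0$, together with $X(0) = 0$, is again a standard Brownian motion, following the same template used in the preceding scaling lemma. As remarked there, a Gaussian process is completely determined by its mean and covariance, so properties $2$ and $3$ of the definition are automatic once we know that $X$ is Gaussian with the right mean and covariance. Consequently the bulk of the work is to check that $X$ is a mean-zero Gaussian process with $E[X(t)X(s)] = \min(s,t)$; the remaining and genuinely new difficulty, absent from the scaling lemma, is to establish continuity of the trajectories, in particular at $t = 0$.

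First I would observe that for any fixed $t > 0$ the variable $X(t) = tB(1/t)$ is a scalar multiple of a Gaussian, and more generally any finite linear combination $\sum_i c_i X(t_i)$ is a linear combination of the jointly Gaussian variables $B(1/t_i)$, so $X$ is a Gaussian process. Next, $E[X(t)] = t\,E[B(1/t)] = 0$. For the covariance, take $0 < s < t$, so that $1/s > 1/t$; using $E[B(u)B(v)] = \min(u,v)$ gives
$$E[X(t)X(s)] = st\,E[B(1/t)B(1/s)] = st \cdot \min(1/t, 1/s) = st \cdot \tfrac{1}{t} = s = \min(s,t).$$
Thus $X$ matches the mean and covariance structure of a standard Brownian motion, which settles the distributional side of the claim.

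The main obstacle is continuity at the origin. On $(0,\infty)$ the path $t \mapsto tB(1/t)$ is continuous, since $B$ is continuous and $t \mapsto 1/t$ is continuous there; the difficulty is to show $\lim_{t \to 0+} tB(1/t) = 0$ almost surely, so that $X$ agrees with its assigned value $X(0) = 0$. The cleanest route is to appeal to the strong law of large numbers for Brownian motion, namely $B(u)/u \to 0$ almost surely as $u \to \infty$: writing $u = 1/t$ turns this into $tB(1/t) = B(u)/u \to 0$ as $t \to 0+$. Alternatively, one can note that the event $\{\lim_{t \to 0+,\, t \in \mathbb{Q}} X(t) = 0\}$ depends only on the countable family $\{X(q) : q \in \mathbb{Q} \cap (0,\infty)\}$ and hence only on the finite-dimensional distributions, which coincide with those of $B$; since this event has probability one for $B$ by its continuity, it has probability one for $X$ as well, and continuity on $(0,\infty)$ then promotes the limit along rationals to the full two-sided limit. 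Either way, once continuity is secured $X$ satisfies all four defining properties and is therefore a standard Brownian motion.
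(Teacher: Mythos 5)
Your computation of the mean and covariance is the same as the paper's: both verify that $X$ is Gaussian with $E[X(t)]=0$ and $E[X(t)X(s)]=\min(s,t)$, which settles the finite-dimensional distributions. Where you go beyond the paper is in treating continuity of the paths at $t=0$: the paper's proof stops after the covariance calculation and never addresses why $tB(1/t)\to 0$ almost surely as $t\to 0+$, even though this is the only genuinely delicate point of the lemma (continuity on $(0,\infty)$ being immediate from continuity of $B$). Your two suggested arguments --- the strong law $B(u)/u\to 0$ as $u\to\infty$, or the observation that the event $\{\lim_{t\to 0+,\,t\in\mathbb{Q}}X(t)=0\}$ is determined by countably many coordinates and hence by the finite-dimensional distributions, which agree with those of $B$ --- are both standard and correct ways to close this gap (with the small caveat that the strong law for Brownian motion is itself often proved via exactly this time-inversion, so the second argument is the safer one to cite if you want to avoid circularity). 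In short, your proof is a strict improvement on the one in the paper.
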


\[  X(t) = 
     \begin{cases}
	0  &  t = 0 \\
	tB(1/t)  &  t > 0
    \end{cases}
\]

\vspace{3mm}

\begin{proof}
For fixed $t$, $X(t) = tB(1/t)$ clearly has a Gaussian distribution. Also
$$E[X(t)] = E[tB(1/t)] = tE[B(1/t)] = t \cdot 0 = 0.$$
Moreover, for $0<s<t$
\begin{align*}
E[X(t)X(s)] 
& = E[tB(1/t)sB(1/s)] \\
&= stE[B(1/t)B(1/s)] \\
&= st \cdot min(1/t, 1/s) \\
&= st \cdot 1/t = s.
\end{align*}
Hence, $X(t)$ is a Brownian motion.
\end{proof}

We can now show that Brownian motion is a martingale. Since by definition we have $E[B(t)] = 0,$ clearly $E[B(t)] < \infty$. Moreover, given a filtration $\{ \mathcal{F}_t : t \geq 0 \}$ and fixing $0 \leq s \leq t,$ we have $$E[B(t) | \mathcal{F}_s] = E[B(s) | \mathcal{F}_s] + E[B(t) - B(s) | \mathcal{F}_s] = B(s),$$
which is enough to show that Brownian motion is indeed a martingale. As we shall later see, the development of the {\Ito} integral takes advantage of the martingale property of Brownian motion. To also note, there are other Stochastic integrals that do not take advantage of this property (i.e. Stratonovich) which lead to different results. This paper focuses on the {\Ito} integral and hence on the properties of Brownian motion and the choices that lead to the {\Ito} integral.\\

\begin{mydef}
A process $\{X(t); 0 \leq t \leq T \}$ is said to be {\Holder} continuous of order $\alpha \in (0,1)$ if
$$P\bigg[  \displaystyle \sup \limits_{s,t \in [0,T]} \frac{|X(t) - X(s)|}{|t-s|^{\alpha}} \leq h \bigg] = 1,$$
where $h > 0$ is an appropriate constant.
\end{mydef}

By definition of Brownian motion, $B(t)$ is almost surely a continuous function. Moreover, sample paths of $B(t)$ can be shown to be {\Holder} continuous.  In [18], Breiman shows that every Brownian path is nowhere differentiable ([18]theorem 12.25) and consequently of unbounded variation in every interval ([18]corollary 12.27). Since Brownian motion has continuous paths, this means that at every point, there is a kink, $i.e.$ Brownian motion changes direction at every $t \geq 0!$

\begin{lemma}
$\langle B(t) \rangle_T = T,$ where $\langle B(t) \rangle_T$ is the quadratic variation of the Brownian motion $B(t)$ over the interval $[0, T].$
\end{lemma}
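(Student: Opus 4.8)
The plan is to prove this by showing that the quadratic variation, understood as the $L^2$-limit of the sums of squared increments over a partition as the mesh shrinks to zero, equals $T$. First I would fix a partition $\Pi = \{0 = t_0 < t_1 < \cdots < t_n = T\}$ of $[0,T]$ with mesh $\|\Pi\| = \max_{1 \le i \le n}(t_i - t_{i-1})$, write $\Delta_i B = B(t_i) - B(t_{i-1})$, and set $Q_\Pi = \sum_{i=1}^n (\Delta_i B)^2$. The goal is then to establish that $Q_\Pi \to T$ in $L^2$ as $\|\Pi\| \to 0$, which is precisely the assertion $\langle B(t) \rangle_T = T$.

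The first key step is to compute $E[Q_\Pi]$. By property 3 of the definition of Brownian motion, each increment $\Delta_i B$ is normally distributed with mean zero and variance $t_i - t_{i-1}$, so $E[(\Delta_i B)^2] = t_i - t_{i-1}$. Summing over the partition gives $E[Q_\Pi] = \sum_{i=1}^n (t_i - t_{i-1}) = T$ by telescoping. Hence the expected quadratic variation is already exactly $T$ for every partition, independently of the mesh; what remains is to show the random quantity $Q_\Pi$ concentrates on this value.

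The second key step, and the crux, is to control $\mathrm{Var}(Q_\Pi)$ and show it vanishes as $\|\Pi\| \to 0$. Because the increments of Brownian motion are independent (property 2), the random variables $(\Delta_i B)^2$ are independent, so $\mathrm{Var}(Q_\Pi) = \sum_{i=1}^n \mathrm{Var}((\Delta_i B)^2)$. For a centered Gaussian $X$ with variance $\sigma^2$ one has $E[X^4] = 3\sigma^4$, so $\mathrm{Var}(X^2) = 3\sigma^4 - \sigma^4 = 2\sigma^4$; applying this with $\sigma^2 = t_i - t_{i-1}$ gives $\mathrm{Var}((\Delta_i B)^2) = 2(t_i - t_{i-1})^2$. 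Therefore $\mathrm{Var}(Q_\Pi) = 2\sum_{i=1}^n (t_i - t_{i-1})^2 \le 2\|\Pi\| \sum_{i=1}^n (t_i - t_{i-1}) = 2T\|\Pi\|$, where I bounded one factor of each squared term by the mesh.

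Combining the two steps, $E[(Q_\Pi - T)^2] = \mathrm{Var}(Q_\Pi) \le 2T\|\Pi\| \to 0$ as $\|\Pi\| \to 0$, so $Q_\Pi \to T$ in $L^2$, establishing $\langle B(t) \rangle_T = T$. I expect the main obstacle to be the variance computation, which leans on the fourth-moment identity for Gaussians together with the use of independence of increments to split the variance of the sum into a sum of variances; once those are in hand, the mesh bound is routine. A minor point worth flagging is that this argument yields convergence in $L^2$ (and hence in probability); strengthening it to almost-sure convergence would require restricting to a refining sequence of partitions and invoking a Borel--Cantelli argument, but the $L^2$ statement is the standard reading of quadratic variation intended here.
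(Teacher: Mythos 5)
Your argument is correct and complete. The paper itself does not prove this lemma --- it simply defers to reference [9] --- so there is no in-text proof to compare against; what you have written is the standard $L^2$ argument that such a reference would supply: $E[Q_\Pi]=T$ exactly for every partition by the variance property of increments, and $\mathrm{Var}(Q_\Pi)=2\sum_i(t_i-t_{i-1})^2\le 2T\|\Pi\|\to 0$ via independence of increments and the Gaussian fourth-moment identity $E[X^4]=3\sigma^4$. Your closing caveat is also well taken: this yields convergence in $L^2$ (hence in probability), and the almost-sure version used implicitly elsewhere in the paper (e.g.\ in the computation of $\int_0^T B\,dB$) would require a refining sequence of partitions together with Borel--Cantelli, or a mesh condition such as $\sum_n\|\Pi_n\|<\infty$.
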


The proof of this lemma can be found in [9].\\ 

	For deterministic functions $f,$ such that $\int |f'(t)|^2 dt < \infty$, the quadratic variation $\langle f(t) \rangle_T$ is zero, which leads to the traditional definition of the classical Riemann integral.  But in this paper, our main concern  is characterizing a stochastic integral, where both $f(t,\omega)$ and $B(t)$ are random processes. As we shall see in the following section, both the non-zero quadratic variation and the martingale properties of Brownian motion are two of the motivating reasons for the construction of {\Ito} calculus, giving meaning to the {\Ito} integral.

\section{The {\Ito} Integral}\label{itogral}

For fixed interval $[S,T],$ let's look at the stochastic integral

\begin{align}
\label{itointegral}
I_{[S,T]} (f) = \int^T_S f(t, \omega)dB_t(\omega),
\end{align}
where $f(t,\omega)$ is a random function defined by the mapping $[0, \infty) \times \Omega \rightarrow \mathbb{R}$, restricted such that for any fixed $t$, the random variable $f(t,w)$ is $\mathcal{F}_t$-measurable and $B_t(\omega)$ is a Brownian motion. We include $\omega$ in the representation of $f$ and $B_t$ to show that indeed they are both random functions taking input values $\omega$ from a sample space $\Omega.$ Note that from now on, we may drop the $\omega$ from notation for simplicity and further will write in place of $B_t (\omega)$ either $B_t$ or $B(t).$ To continue, if $B(t)$  was differentiable, we could write $dB(t) = B'(t)dt.$ But, as we know from above, $B(t)$ is non-differentiable at every point, so the chain rule we are all accustomed to from ordinary calculus on deterministic functions does not apply. Moreover, the paths of $B(t)$ are not of bounded variation and hence \eqref{itointegral} is meaningless in the Stieltjes-Lebesgue sense. To make sense of \eqref{itointegral}, we begin by approximating $f(t,\omega)$ as a simple process, $i.e.$ $f(t,\omega)$ has the form  
$$\displaystyle \sum \limits_{j \geq 0} e_{j}(\omega) {\bf{1}}_{[t_j , t_{j+1}]} (t),$$
 where ${\bf{1}}_{[t_j , t_{j+1}]} (t)$ is the indicator function over the interval $[t_j , t_{j+1}].$\\

Let $\{ t_0, t_1,...,t_{n-1}, t_n\}$ be a partition of $[S,T]$ such that 
$$S = t_0 \leq t_1 \leq \cdots \leq t_{n-1} \leq t_n = T.$$ 

We want $f(t,\omega)$ to be constant, in $t,$ over each subinterval $[t_i,t_{i+1}].$ So, define  $f(t,\omega)$ to be $e_i (\omega)$ over each interval $[t_i,t_{i+1}].$ Thus $e_i (\omega)$ is a random variable  and independent of $t$ over each respective subinterval. Hence, it clearly follows that for each subinterval $[t_i,t_{i+1}],$

\begin{align*}
\int^{t_{i+1}}_{t_i} f(t, \omega)dB_t 
& = \int^{t_{i+1}}_{t_i} e_i (\omega) dB_t \\
& = e_i (\omega)[B(t_{i+1})-B(t_{i})],
\end{align*}

since $e_i (\omega)$ is independent of $t$. And hence,

\begin{align}
\label{itosum}
\int^T_S f(t, \omega)dB_t = \displaystyle \sum \limits_{i = 0}^{n-1} e_{i}(\omega) [B(t_{i + 1}) - B(t_{i})].
\end{align}

Note that in this case, we let $f(t,\omega)$ be a simple process, where we still need to make sense of $e_i (\omega)$. Remember that while constructing the Riemann integral over deterministic functions, we approximate the integrand over each subinterval by choosing any $t^{*}_i \in [t_i , t_{i+1}],$ our choice of $t^{*}_i$ is irrelevant. Then, as the length of the longest subinterval in the partition tends to zero, our limit turns out to be $\int^T_S f(t)dt.$ But, as will be shown, the choice of $t^{*}_i$ when the integrand is a random function, as in \eqref{itosum}, is important. \\

Let $I_L$ and $I_R$ be the left and right point approximation for \eqref{itosum}, respectively, where $e_i (\omega) = B(t^{*} _i).$ Thus,
\begin{align*}
E[I_L] 
& =  E \bigg[ \displaystyle \sum \limits_{i = 0}^{n-1}  B(t^{*} _i)[B(t_{i + 1}) - B(t_{i})] \bigg]\\
& = \displaystyle \sum \limits_{i = 0}^{n-1}  E \Big[ B(t_i)[B(t_{i + 1}) - B(t_{i})] \Big]\\
& =  \displaystyle \sum \limits_{i = 0}^{n-1}  E[B(t_i)] E[B(t_{i + 1}) - B(t_{i})]\\
& = 0,
\end{align*}
which follows from the definition of Brownian motion, $i.e.$ $B(t)$ has independent increments and the expectation of each increment is zero. On the other hand,
\begin{align*}
E[I_R] 
& = E \Bigg[ \displaystyle \sum \limits_{i = 0}^{n-1}  B(t^{*} _{i})[B(t_{i + 1}) - B(t_{i})] \bigg]\\
& =  \displaystyle \sum \limits_{i = 0}^{n-1}  E \bigg[ B(t_{i + 1})[B(t_{i + 1}) - B(t_{i})] \bigg]\\
& =  \displaystyle \sum \limits_{i = 0}^{n-1}  E \bigg[ [B(t_{i + 1})^2 - B(t_{i + 1})B(t_{i})] \bigg]\\
& =  \displaystyle \sum \limits_{i = 0}^{n-1}  \bigg[ E[B(t_{i + 1})^2] - E[B(t _{i + 1})B(t_{i})] \bigg]\\
& =  \displaystyle \sum \limits_{i = 0}^{n-1} [t_{i + 1} - t_{i}] = T,\\
\end{align*}
since the variance of $B(t)$ is equal to the length of the interval and the covariance of $B(t)$ and $B(s)$ is equal to the minimum of $s$ and $t.$ And so we see that the interpretation of \eqref{itointegral} depends greatly on the value of $t^{*}_i$ that is chosen. Moreover, $E[I_L] = 0$ and $E[I_R] = T$ reflects the fact that the variations of the paths of $B_t$ are too big for us to define the integral \eqref{itointegral} in the Riemann sense. In fact, as mentioned earlier, Breiman [18] shows that $B_t$ is almost surely nowhere differentiable, in particular the total variation of the path is almost surely infinite. Recall that we restricted $f(t,\omega)$ to be $\mathcal{F}_t$-measurable, so it seems reasonable enough to choose the approximating functions $e_i (\omega)$ to be $\mathcal{F}_t$-measurable as well. Since this is the case, we then choose $t^{*}_i$ to be the left end point, i.e. $t^{*}_i \in [t_i, t_{i+1}].$ This choice of $t^{*}_i$ leads to the Ito integral.\\

Notice that we have interpreted the {\Ito} integral, \eqref{itointegral} for simple functions $f(t,\omega),$ $i.e.$ functions of the type such that $f(t,\omega) = \displaystyle \sum \limits_{i = 0}^{n-1} e_i (\omega),$ where $e_i (\omega)$ is a random function constant in $t$ over the subinterval $[t_i , t_{i+1}].$ But, what we are really interested in, is what are all the types of functions $f(t,\omega)$ that the {\Ito} integral will be defined. 

\begin{mydef}
Let $V(S,T)$ be the class of functions $\{ f(t,\omega) : [0, \infty ) \times \Omega \rightarrow \mathbb{R} \}$ such that $f(t,\omega)$ is $\mathcal{F}_t$-adapted and $\int^T_S f(t, \omega)^2 dt < \infty.$  
\end{mydef}

It can be shown in [10] that for any function $f(t,\omega) \in V(S,T)$, there exists a sequence of simple functions $\varphi_n (t,\omega)$, such that the {\Ito} integral is defined for functions $f(t,\omega)$ as

\begin{align}
\label{itolimit}
\int^T_S f(t, \omega)dB_t =  \lim_{n \rightarrow \infty} \int^T_S \varphi_n (t,\omega) dB_t.
\end{align}

An outline of the approximation is given here. First, it can be shown that for any $f(t,\omega) \in V(S,T)$, there exists a sequence of bounded functions $\phi_n (t,\omega)$ such that 

\begin{align}
\label{bounded}
\lim_{n \rightarrow \infty} E \Bigg[ \int^T_S (f - \phi_n)^2 dt \Bigg] = 0.
\end{align}

Next, for any bounded function  $\phi (t,\omega) \in V(S,T)$, it can be shown that there exists a sequence of bounded continuous functions $\psi_n (t,\omega)$ such that

\begin{align}
\label{continuous}
\lim_{n \rightarrow \infty} E \Bigg[ \int^T_S ( \phi - \psi_n )^2 dt \Bigg] = 0.
\end{align}

And lastly, for any bounded continuous function $\psi (t,\omega) \in V(S,T),$ there exists a sequence of simple functions $\varphi_n (t,\omega)$ such that

\begin{align}
\label{simple}
\lim_{n \rightarrow \infty} E\Bigg[ \int^T_S ( \psi - \varphi_n)^2 dt \Bigg] = 0.
\end{align}

And so we can see, by combining together equations \eqref{bounded}, \eqref{continuous} and \eqref{simple} we get that

\begin{equation}
\label{itoconverge}
\lim_{n \rightarrow \infty} E\Bigg[ \int^T_S ( f - \varphi_n)^2 dt \Bigg] = 0.
\end{equation}

This is why in \eqref{itosum}, we initially chose $f(t,\omega)$ to be a simple function $e_i (\omega).$ In order to finish the discussion on why the limit in \eqref{itolimit} exists, let's first present some properties of the {\Ito} integral.

\vspace{5mm}

\begin{theorem}
Let $\varphi \in V(S,T)$ be a random process with continuous sample paths. Then the process $\int^T_S \varphi (t,\omega) dB_t$ is a martingale with respect to the filtration $\mathcal{F} = \{ \mathcal{F}_t : t \geq 0 \}.$
\end{theorem}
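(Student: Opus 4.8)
The statement to be proved is that the indefinite Itô integral $M_t := \int_S^t \varphi\,dB$, regarded as a process in its upper endpoint $t \in [S,T]$, is an $\mathcal{F}_t$-martingale; this is the natural reading of ``the process $\int_S^T \varphi\,dB_t$,'' since a single definite integral is only one random variable. Three things must be checked: that $M_t$ is $\mathcal{F}_t$-measurable, that $E[|M_t|] < \infty$, and that $E[M_t \mid \mathcal{F}_s] = M_s$ for $S \le s \le t \le T$. The plan is to establish all three first for simple integrands, where everything is an explicit finite sum, and then to pass to the general case through the $L^2$ convergence built into the definition \eqref{itolimit}.

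First I would treat a simple process $\varphi = \sum_j e_j \mathbf{1}_{[t_j, t_{j+1}]}$ with each $e_j$ being $\mathcal{F}_{t_j}$-measurable, so that $M_t = \sum_j e_j[B(t_{j+1}\wedge t) - B(t_j \wedge t)]$. Fixing $s \le t$ and refining the partition so that $s$ and $t$ are themselves grid points (harmless, and it keeps each coefficient measurable at the left endpoint of its increment), I would write $M_t - M_s = \sum_k e_k[B(u_{k+1}) - B(u_k)]$ as a telescoping sum over the subdivision points in $[s,t]$. I then compute $E[M_t - M_s \mid \mathcal{F}_s]$ term by term: conditioning on the larger $\sigma$-algebra $\mathcal{F}_{u_k} \supseteq \mathcal{F}_s$ and using the tower property, $e_k$ pulls out since it is $\mathcal{F}_{u_k}$-measurable, and $B(u_{k+1}) - B(u_k)$ is independent of $\mathcal{F}_{u_k}$ with mean zero, so $E[e_k(B(u_{k+1}) - B(u_k)) \mid \mathcal{F}_{u_k}] = e_k \cdot 0 = 0$. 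Hence each term vanishes and $E[M_t \mid \mathcal{F}_s] = M_s$; adaptedness and integrability are immediate in this case.

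For general $\varphi \in V(S,T)$, let $\varphi_n$ be the approximating simple processes with $E[\int_S^T (\varphi - \varphi_n)^2\,dt] \to 0$ as in \eqref{itoconverge}, and set $M_t^{(n)} := \int_S^t \varphi_n\,dB$. By the $L^2$-isometry that underlies the construction of the integral, $E[(M_t^{(n)} - M_t)^2] = E[\int_S^t (\varphi_n - \varphi)^2\,ds] \le E[\int_S^T (\varphi_n - \varphi)^2\,ds] \to 0$, so the single sequence $\varphi_n$ gives $M_t^{(n)} \to M_t$ in $L^2$ for every fixed $t$; in particular $M_t \in L^2 \subset L^1$ and is $\mathcal{F}_t$-measurable as an $L^2$-limit of $\mathcal{F}_t$-measurable variables. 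Since $E[\cdot \mid \mathcal{F}_s]$ is the orthogonal projection onto $L^2(\mathcal{F}_s)$, hence an $L^2$-contraction, it commutes with this limit: $E[M_t \mid \mathcal{F}_s] = \lim_n E[M_t^{(n)} \mid \mathcal{F}_s] = \lim_n M_s^{(n)} = M_s$, where the middle equality is the simple-function case already proved. This yields the martingale property for all $\varphi \in V(S,T)$.

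The hard part will be the limiting step rather than the simple-function computation: one must ensure the martingale identity survives the passage to the limit, and the clean way to guarantee this is to carry out everything in $L^2$, using the isometry to get $L^2$ convergence of the approximating integrals at each time and the contractivity of conditional expectation to interchange the limit with $E[\cdot \mid \mathcal{F}_s]$. The only other point demanding care is the bookkeeping in the simple case, where the partition must be refined so that $s$ and $t$ are grid points before the increment is split, so that every summand has its coefficient measurable with respect to the $\sigma$-algebra at the left endpoint of its Brownian increment.
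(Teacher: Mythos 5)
The paper does not actually prove this theorem; it states it and defers to Oksendal [10]. Your argument is correct and is essentially the standard proof given there: establish the martingale identity for simple integrands by conditioning each summand on the $\sigma$-algebra at the left endpoint of its Brownian increment (after refining the partition so that $s$ and $t$ are grid points), then pass to general $\varphi$ by using the isometry to get $L^2$-convergence of $M^{(n)}_t$ to $M_t$ at each fixed $t$ and the $L^2$-contractivity of conditional expectation to carry the identity through the limit; your identification of the limiting step as the only delicate point is right. The one caveat is inherited from the paper rather than introduced by you: the class $V(S,T)$ as defined here only requires $\int^T_S \varphi(t,\omega)^2\,dt < \infty$ pathwise, whereas your appeal to \eqref{itoconverge} and to the isometry needs the stronger condition $E\big[\int^T_S \varphi^2\,dt\big] < \infty$; with only the pathwise condition the integral is defined by localization and one obtains a local martingale rather than a true martingale, so you should state the expectation version of the integrability hypothesis explicitly.
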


For the proof we refer to Oksendal in [10].\\ 

	Now, since \eqref{itointegral} is shown to be a martingale, we know it must be true that 
$$E \Bigg[ \int^T_S f(t, \omega)dB_t \Bigg] = 0$$
for all $t \geq 0.$ To evaluate the variance of \eqref{itointegral}, we need the result of the next theorem, known as the {\Ito} isometry.

\vspace{10mm}

\begin{theorem}
Let $f(t,\omega)$ and $B_t$ be defined as in \eqref{itointegral}. Then
\begin{align}
E\Bigg[ \Bigg( \int^T_S f(t, w)dB_t \Bigg)^2 \Bigg] = E \Bigg[ \int^T_S f(s, \omega)^2 ds \Bigg].
\end{align}
\end{theorem} 

\begin{proof}
\begin{align*}
E\Bigg[ \Bigg( \int^T_S f(t, \omega)dB_t \Bigg)^2 \Bigg]
& = E\Bigg[ \Bigg( \displaystyle \sum \limits_{i = 0}^{n-1} \int^{t_{i+1}}_{t_i} f(t, \omega)dB_t \Bigg)^2 \Bigg]\\
& =  E\Bigg[ \Bigg( \displaystyle \sum \limits_{i = 0}^{n-1} e_i(\omega) (B(t_{i+1}) - B(t_i)) \Bigg)^2 \Bigg]\\ 
& = E\Bigg[ \displaystyle \sum \limits_{i = j = 0}^{n-1} e_i(\omega) e_j(\omega) (B(t_{i+1}) - B(t_i))(B(t_{j+1}) - B(t_j)) \Bigg]\\ 
& =  \displaystyle \sum \limits_{i = 0}^{n-1} E\Bigg[ e_i(\omega)^2 (B(t_{i+1}) - B(t_i))^2 \Bigg]\\  
& + 2 \displaystyle \sum \limits_{i < j} E\Bigg[ e_i(\omega) e_j(\omega) (B(t_{i+1}) - B(t_i)) E[B(t_{j+1}) - B(t_j) | \mathcal{F}_t ] \Bigg]\\
& =  \displaystyle \sum \limits_{i = 0}^{n-1} E\Bigg[ e_i(\omega)^2 (B(t_{i+1}) - B(t_i))^2 \Bigg] + 0\\
& =  \displaystyle \sum \limits_{i = 0}^{n-1} E[e_i(\omega)^2] (t_{i+1} - t_i)\\
& = E\Bigg[  \int^T_S f(t, \omega)^2 dt \Bigg]. 
\end{align*}
This completes the proof.
\end{proof}

\vspace{10mm}

\begin{theorem}
Let $I_{[0,T]} (f)$ be the {\Ito} integral of $f(t,\omega)$ over the interval $[0,T].$ Then the quadratic variation of $I_{[0,T]} (f)$ is equal to $\int^T_0 f(t,\omega)^2 dt,$ $i.e.$ 
\begin{align}
\langle I_{[0,T]} (f) \rangle_T = \int^T_0 f(t,\omega)^2 dt.
\end{align}
\end{theorem}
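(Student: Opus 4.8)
The plan is to compute the quadratic variation directly from its definition as the mesh-limit of sums of squared increments, reducing first to the case of simple integrands and then extending to a general $f \in V(S,T)$ by approximation. Write $I_t = \int_0^t f(s,\omega)\, dB_s$ and let $\Pi = \{0 = s_0 < s_1 < \cdots < s_m = T\}$ be a partition with mesh $\|\Pi\| = \max_j (s_{j+1} - s_j)$. By definition, $\langle I_{[0,T]}(f) \rangle_T = \lim_{\|\Pi\| \to 0} \sum_j (I_{s_{j+1}} - I_{s_j})^2$, the limit being taken in $L^2(\Omega)$ (equivalently, in probability).

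First I would treat a simple integrand $f(t,\omega) = \sum_i e_i(\omega) \mathbf{1}_{[t_i, t_{i+1}]}(t)$. Refining $\Pi$ so that it contains all the breakpoints $t_i$, the increment of $I$ across a subinterval on which $f \equiv e_i$ is exactly $e_i(\omega)(B(s_{j+1}) - B(s_j))$, so the sum of squared increments becomes $\sum_j e_{i(j)}(\omega)^2 (B(s_{j+1}) - B(s_j))^2$. The key input is the quadratic variation lemma for Brownian motion, which gives $\sum (B(s_{j+1}) - B(s_j))^2 \to$ the length of the interval; localizing this on each block where $f$ is constant yields the target $\sum_i e_i^2 (t_{i+1} - t_i) = \int_0^T f(t,\omega)^2\, dt$. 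To make this rigorous in $L^2$, I would estimate the error $E\big[\big(\sum_j e_{i(j)}^2((B(s_{j+1}) - B(s_j))^2 - (s_{j+1} - s_j))\big)^2\big]$; by $\mathcal{F}_{s_j}$-measurability of $e_{i(j)}$ together with the independence of Brownian increments the cross terms vanish, and using $E[((\Delta B)^2 - \Delta s)^2] = 2(\Delta s)^2$ the diagonal terms are bounded by a constant times $\|\Pi\|\,E[\int_0^T f^2\,dt]$, which tends to zero.

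The remaining step is to pass from simple integrands to a general $f \in V(S,T)$. Choosing simple $\varphi_n$ with $E[\int_0^T (f - \varphi_n)^2\, dt] \to 0$ as in \eqref{itoconverge}, I would control the difference of quadratic variations using the \Ito{} isometry: since $I_{[0,T]}(f) - I_{[0,T]}(\varphi_n) = I_{[0,T]}(f - \varphi_n)$, the isometry bounds $E[(I_T(f) - I_T(\varphi_n))^2]$ by $E[\int_0^T (f - \varphi_n)^2\,dt]$, while $\int_0^T \varphi_n^2\,dt \to \int_0^T f^2\,dt$ in $L^1$. Combining the simple-function result with these two limits then delivers $\langle I_{[0,T]}(f)\rangle_T = \int_0^T f(t,\omega)^2\,dt$.

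I expect the main obstacle to be the interchange of limits: the quadratic variation is itself defined by a partition limit, so justifying that $\langle I(\varphi_n)\rangle_T \to \langle I(f)\rangle_T$ commutes with the mesh-limit requires control of the partition sums that is uniform in $n$, rather than a single pointwise estimate. A cleaner alternative that sidesteps this, and which I would fall back on if the direct approximation proves awkward, is the martingale characterization of quadratic variation: since $I_t$ is a martingale by the theorem above, it suffices to show that $I_t^2 - \int_0^t f^2\,ds$ is a martingale, which for simple $f$ follows from a conditional version of the \Ito{} isometry computation already carried out and then extends by the same isometry bound.
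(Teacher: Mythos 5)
Your core argument is the same as the paper's: reduce to a simple integrand, refine the partition so that it contains the breakpoints of $f$, and apply the quadratic-variation lemma for Brownian motion blockwise to obtain $\sum_i e_i^2\,(t_{i+1}-t_i)=\int_0^T f^2\,dt$. You go beyond the paper in two useful respects. First, the paper merely asserts the blockwise limit, whereas you supply the $L^2$ error estimate; note that your diagonal terms involve $E[e_{i(j)}^4]\,(\Delta s)^2$, so to land on a constant times $\|\Pi\|\,E[\int_0^T f^2\,dt]$ you need the $e_i$ bounded (the standard convention for simple integrands, and consistent with the paper's approximation scheme). Second, the paper proves the statement only for simple $f$ and never extends it, whereas you address the passage to general $f\in V(S,T)$, correctly identify the interchange of the mesh limit with the approximation limit as the delicate point, and offer the martingale characterization of $I_t^2-\int_0^t f^2\,ds$ as a sound fallback; either route closes a gap the paper leaves open.
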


\begin{proof}
Let $\{ t_0, t_1, ... , t_{n-1} , t_n \}$ be a partition of $[0 , T]$ such that $t_0 = 0$ and $t_n = T,$ and let $f(t,\omega) = e_i(\omega)$ for $t \in [t_i , t_{i+1}],$ as defined above. Also, to simplify notation, let $I_{[0,t]} (f) = I_t.$ First, observe that

\begin{align}
\label{itoqv}
\langle I_T \rangle_T = \displaystyle \sum \limits_{i = 0}^{n-1} [\langle I_{t_{i+1}}  \rangle_{t_{i+1}} -  \langle I_{t_i} \rangle_{t_{i}} ].
\end{align}

To compute \eqref{itoqv}, let $\{ s_0, s_1, ... , s_{m-1}, s_m \}$ be a partition of $[t_i , t_{i+1}]$ such that $s_0 = t_i$, $s_n = t_{i+1}$ and $M_m = \displaystyle \max \limits_{j} (s_{j+1} - s_j).$ Then, by definition 
\begin{align*}
\langle I_{t_{i+1}} \rangle_{t_{i+1}} -  \langle I_{t_i} \rangle_{t_{i}} 
& = \lim_{M_m \rightarrow 0} \displaystyle \sum \limits_{j = 0}^{m-1} [I_{s_{j+1}} - I_{s_j}]^2\\
& = \lim_{M_m \rightarrow 0} \displaystyle \sum \limits_{j = 0}^{m-1} [\int^{s_{j+1}}_{s_j} f(t_i) dB(t)]^2\\
& =  \lim_{M_m \rightarrow 0} \displaystyle \sum \limits_{j = 0}^{m-1} f(t_i)^2 [B(s_{j+1}) - B(s_j)]\\ 
& =  \lim_{M_m \rightarrow 0}  f(t_i)^2  \displaystyle \sum \limits_{j = 0}^{m-1} [B(s_{j+1}) - B(s_j)]^2\\
& = f(t_i)^2 (t_{i+1} - t_i).
\end{align*}
And hence it follows that
\begin{align*}
\langle I_T \rangle_T 
& = \lim_{M_n \rightarrow 0} \displaystyle \sum \limits_{i = 0}^{n-1} f(t_i)^2 (t_{i+1} - t_i)\\
& = \lim_{M_n \rightarrow 0} \displaystyle \sum \limits_{i = 0}^{n-1} \int^{t_{i+1}}_{t_i} f(u)^2 du\\
& =  \int^{T}_0 f(u)^2 du.
\end{align*}
\end{proof}

\vspace{5mm}

Let's return to the definition of the {\Ito} integral given in \eqref{itolimit} and give reason why the limit exists. By {\Ito}'s isometry
\begin{align*}
E[(I_{[S,T]} (\varphi_n) - I_{[S,T]} (\varphi_m) )^2] 
& = E[(I_{[S,T]} (\varphi_n - \varphi_m) )^2]\\
& = E \Bigg[ \int^T_S  (\varphi_n - \varphi_m)^2 dt \Bigg].\\
\end{align*}

Further, since $\displaystyle \lim_{n \rightarrow \infty} \varphi_n = f(t,\omega)$ and by the triangle inequality, we get
$$E \Bigg[ \int^T_S  (\varphi_n - \varphi_m)^2 dt \Bigg] \leq E \Bigg[ \int^T_S  (\varphi_n - f(t,\omega))^2 dt \Bigg] + E \Bigg[ \int^T_S  (f(t,\omega)- \varphi_m)^2 dt \Bigg],$$
which, by \eqref{itoconverge}, tends to zero as $n \rightarrow \infty.$ Hence the sequence $\big\{  \int^T_S \varphi_n dB(t) \big\}$ forms a Cauchy sequence in $L_2(\Omega, \mathcal{F} , \mathbb{P}),$ which is a complete space. Therefore the limit of  $\big\{  \int^T_S \varphi_n dB(t) \big\}$ exists and is an element of  $L_2(\Omega, \mathcal{F} , \mathbb{P}).$ The limit, by definition in \eqref{itolimit}, is the {\Ito} integral.

\vspace{5mm}

\begin{example}
Let's use the {\Ito} integral, as defined in \eqref{itolimit}, to calculate $\int^t_0 B_s dB_s.$ First, let $\{t_0 , t_1, ... , t_{n-1}, t_n \}$ be a partition of $[0,T]$ such that $t_0 = 0$ and $t_n = T.$ Now, define $\varphi_n (t,\omega) = B(t_i)$ whenever $t \in [t_i , t_{i+1}].$ Let us now check and see if  $\varphi_n (t,\omega)$ is an appropriate approximation to $f(t,\omega)$ as described above. Since the variation of a Brownian motion is equal to the length of the interval, we have

\begin{align*}
E \Bigg[ \int^T_0 ( \varphi_n - B(t))^2 dt \Bigg]
& = E \Bigg[ \displaystyle \sum \limits_{i = 0}^{n-1} \int^{t_{i+1}}_{t_i} ( B(t) - \varphi_n )^2 dt \Bigg]\\
& =  \displaystyle \sum \limits_{i = 0}^{n-1} \int^{t_{i+1}}_{t_i} E \Bigg[ ( B(t) - B(t_i) )^2 \Bigg] dt \\
& =  \displaystyle \sum \limits_{i = 0}^{n-1} \int^{t_{i+1}}_{t_i} (t - t_i) dt\\
& =  \displaystyle \sum \limits_{i = 0}^{n-1} \frac{1}{2} (t_{i+1} - t_i )^2.\\
\end{align*}
To continue, let $M_n$ be defined to be the maximum subinterval length between each $t_i$ in our partition, $i.e.$ $M_n = \displaystyle \max \limits_{i} (t_{i+1} - t_i).$ \\
And so for all $i \in \{0, 1, ... , n-1 \}$

$$\frac{1}{2} (t_{i+1} - t_i)^2   \leq   \frac{M_n}{2} (t_{i+1} - t_i). $$

Summing over all $i$ we get

$$\frac{M_n}{2} \displaystyle \sum \limits_{i = 0}^{n-1}  (t_{i+1} - t_i)  =  \frac{M_n}{2} T.$$

But, as $n \rightarrow \infty,$ $M_n \rightarrow 0.$ So we get 
$$E [ \int^T_0 ( \varphi_n - B(t))^2 dt] \rightarrow 0$$
 as $n \rightarrow \infty.$ Hence, $\varphi_n (t,\omega)$ is an appropriate approximation for $B(t),$ thus 
$$ \int^T_0 B(t) dB(t) = \lim_{n \rightarrow \infty}\int^T_0 \varphi_n (t,\omega) dB(t).$$ 
And so

\begin{align*}
\int^T_0 \varphi_n (t,\omega) dB(t) 
& = \displaystyle \sum \limits_{i = 0}^{n-1} B(t_i)[B(t_{i+1}) - B(t_i)]\\
& = \displaystyle \sum \limits_{i = 0}^{n-1}  \frac{1}{2}[B(t_{i+1})^2 - B(t_{i})^2 - (B(t_{i+1}) - B(t_{i}))^2]\\
& =   \frac{1}{2} B(T)^2 - \frac{1}{2} \displaystyle \sum \limits_{i = 0}^{n-1}  (B(t_{i+1}) - B(t_{i}))^2\\
\end{align*}

But, as $n \rightarrow \infty,$ observe that $\displaystyle \sum \limits_{i = 0}^{n-1}  (B(t_{i+1}) - B(t_{i}))^2 \rightarrow T.$ And so we have 
$$ \int^T_0 B(t) dB(t) = \frac{1}{2} B(T)^2 - \frac{1}{2} T.$$
\end{example}

 If $B(t)$ was differentiable, we would simply get that $\int^T_0 B(t) dB(t) = \frac{1}{2} B(T)^2,$ But it's not. So where does the $\frac{1}{2} T$ term come from? It comes from the fact that Brownian motion has non-zero quadratic variation. Remembering that the {\Ito} integral is a martingale, we see that this extra term makes sense because
 $$E \Bigg[ \int^T_0 B(t) dB(t) \Bigg] = 0.$$
 But
 $$E \Bigg[ \frac{1}{2} B(T)^2 \Bigg] = \frac{1}{2} T.$$

\subsection{The {\Ito} Formula}\label{itormula}

As shown by the example in the previous section, the procedure of computing an {\Ito} integral by finding an approximating sequence satisfying \eqref{itolimit} can be rather time and work consuming. When calculating the classical Riemann integral, which is defined as the limit of Riemann sums, one takes advantage of the fundamental theorem of calculus and the chain rule, making calculations much easier. In the same vein, it is also desirable to have an {\Ito} integral version of the chain rule. This version is called the {\Ito} formula. 

\begin{mydef}
An {\Ito} process is a stochastic process $X(t)$ on $(\Omega, \mathcal{F} , \mathbb{P})$ of the form
\begin{align}
X(t) = X(0) + \int^t_0 \mu (s,\omega) ds +  \int^t_0 \nu (s,\omega) dB(s). 
\end{align}
Where $\nu \in V[S,T],$ $\mu$ is $\mathcal{F}_t$ adapted and that $\int^t_0 \nu^2 ds < \infty$ and  $\int^t_0 |\mu| ds < \infty$ for all $t \geq 0,$ almost surely. 
\end{mydef}

\vspace{5mm}

\begin{theorem}
({\Ito} Formula) Let $X(t)$ be an {\Ito} process. Let $f(t, x)$ be such that $f_t(t, x)$, $f_x(t, x)$ and $f_{xx}(t, x)$ exist and are continuous. Then, $Y(t) = f(t, X(t))$ is an {\Ito} process and 
\begin{align*}
Y(T) - Y(0) = \int^T_0 f_t (t, X(t)) dt +  \int^T_0 f_x ( t, X(t)) dX(t)
 + \frac{1}{2} \int^T_0 f_{xx}(t, X(t)) (dX(t))^2.
\end{align*}
Where $(dX(t))^2 = dX(t) \cdot dX(t)$ is calculated according to the rules 
$$dt \cdot dt = dt \cdot dX(t) = dX(t) \cdot dt = 0$$
and 
$$dX(t) \cdot dX(t) = dt.$$
\end{theorem}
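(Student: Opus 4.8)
The plan is to prove the formula by a second-order Taylor expansion along a shrinking partition of $[0,T]$ and then identify the limit of each resulting sum. Throughout I would first reduce to the case where $f$, $f_t$, $f_x$, $f_{xx}$ are bounded and where the coefficients $\mu$ and $\nu$ of $X$ are simple (piecewise constant) processes; the general case then follows by the same approximation scheme used to define the {\Ito} integral in \eqref{itolimit}, together with the {\Ito} isometry to control the stochastic integrals in $L^2(\Omega, \mathcal{F}, \mathbb{P})$.

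Fix a partition $0 = t_0 < t_1 < \cdots < t_n = T$ with mesh $M_n \to 0$, and write $\Delta t_i = t_{i+1} - t_i$ and $\Delta X_i = X(t_{i+1}) - X(t_i)$. Telescoping gives
$$Y(T) - Y(0) = \sum_{i=0}^{n-1} [f(t_{i+1}, X(t_{i+1})) - f(t_i, X(t_i))].$$
I would then expand each summand by Taylor's theorem, to first order in the time variable (only $f_t$ is assumed to exist) and to second order in the space variable:
$$f(t_{i+1}, X(t_{i+1})) - f(t_i, X(t_i)) = f_t \Delta t_i + f_x \Delta X_i + \tfrac{1}{2} f_{xx} (\Delta X_i)^2 + R_i,$$
where all derivatives are evaluated at $(t_i, X(t_i))$ and $R_i$ collects the Taylor remainders together with the error from replacing derivatives at intermediate points by their values at $(t_i, X(t_i))$. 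The first two leading sums converge to the first two integrals in the statement: $\sum f_t \Delta t_i \to \int_0^T f_t\,dt$ by continuity of $f_t$ and the definition of the Riemann integral, and $\sum f_x \Delta X_i \to \int_0^T f_x\,dX$ from the definition of the {\Ito} integral, since $f_x(t, X(t))$ is continuous and adapted.

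The substance of the argument — and the main obstacle — is showing
$$\sum_{i=0}^{n-1} f_{xx}(t_i, X(t_i)) (\Delta X_i)^2 \longrightarrow \int_0^T f_{xx}(t, X(t))\, \nu(t,\omega)^2\, dt$$
in $L^2$, which is precisely the origin of the correction term and the heuristic rule $dX \cdot dX = dt$ (after substituting $dX = \mu\,dt + \nu\,dB$). Using $\Delta X_i \approx \mu_i \Delta t_i + \nu_i \Delta B_i$ one sees that only the $\nu_i^2 (\Delta B_i)^2$ piece survives, while the $(\Delta t_i)^2$ and $\Delta t_i \Delta B_i$ contributions are negligible as $M_n \to 0$. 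The key estimate is then $\sum g(t_i)(\Delta B_i)^2 \to \int_0^T g\,dt$ in $L^2$ for bounded adapted $g$, which rests on $E[(\Delta B_i)^2 \mid \mathcal{F}_{t_i}] = \Delta t_i$ and $\mathrm{Var}[(\Delta B_i)^2] = 2(\Delta t_i)^2$; this is the quantitative form of the fact that $\langle B \rangle_T = T$ established earlier. Finally I would verify $\sum R_i \to 0$ using uniform continuity of $f_{xx}$ and the boundedness reductions, and then remove the simplifying assumptions by the approximation argument, which completes the proof.
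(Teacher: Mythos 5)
Your sketch is correct and is exactly the standard argument: the paper itself gives no proof of this theorem, stating only that "the proof of this can be found in [10] from the Taylor series expansion of $f(t,x)$," and your outline (localization to bounded derivatives and simple coefficients, telescoping plus second-order Taylor expansion, the $L^2$ convergence $\sum g(t_i)(\Delta B_i)^2 \to \int_0^T g\,dt$ via the conditional mean and variance of $(\Delta B_i)^2$, and control of the remainders by uniform continuity) is precisely that Taylor-expansion proof. No discrepancy with the paper's approach, since you have simply supplied the details it delegates to the reference.
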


Furthermore, we can see that if $X(t) = B(t),$ we get that
\begin{align*}
Y(T) - Y(0) = \int^T_0 f_t (t, B_t) dt +  \int^T_0 f_x ( t, B_t) dB_t
 + \frac{1}{2} \int^T_0 f_{xx}(t, B_t) dt.
\end{align*}

The proof of this can be found in [10] from the Taylor series expansion of $f(t,x).$
\vspace{5mm}

\begin{example}
Let $f(t,x) = \frac{1}{2}x^2$ and $B(t)$ be a standard Brownian motion. Then by the {\Ito} formula,
\begin{align*}
df(t, B_t) 
&= f_t dt + f_x dB_t + \frac{1}{2} f_{xx} dt\\
&= 0 \cdot dt + B_t dB_t + \frac{1}{2} \cdot 1 dt\\
&= B_t dB_t + \frac{1}{2} dt.
\end{align*}
By integrating we get
\begin{align*}
\int^t_0 d\Big(f(t,B_t) \Big)
& = \int^t_0 d \Big( \frac{1}{2} B^s_2 \Big)\\
& = \frac{1}{2} B^2(t) - \frac{1}{2} B^2(0)\\
&= \frac{1}{2} B^2(t),
\end{align*}
since $B(0) = 0.$
Also,
\begin{align*}
\int^t_0 \Big( B_s dB_s + \frac{1}{2} ds \Big)
&= \int^t_0 B_s dB_s + \frac{t}{2}.
\end{align*}
Hence,
$$\int^t_0 B_s dB_s =  \frac{1}{2} B^2(t) - \frac{t}{2}.$$
\end{example}

\section{Fractional Brownian Motion}\label{FBM}
We now begin our discussion of fBm, as introduced in the Introduction. This process was first introduced by Kolmogorov in [5].  The Hurst index was named by Mandelbrot from the statistical analysis of hydrologist Harold Edwin Hurst, who studied yearly water run-offs of the Nile River over the years 662 to 1469 in [8]. As mentioned, fBm is a generalization of Brownian motion. But, as will be presented, fBm, whenever $H \neq 1/2,$ behaves very differently than Brownian motion (when $H = 1/2$). There are two properties of importance in which fBm with $H \neq 1/2$ differs from Brownian motion ($H = 1/2$), fBm does not have independent increments and it is not a martingale (even more, a semi-martingale!). These characteristics inherent in Brownian motion lead to the construction of the {\Ito} integral.  Hence, as we develop the framework for the construction of the stochastic integral w.r.t. fBm, we must approach it differently. The difficulty in this is largely due to the fact that fBm fails to be a semi-martingale (as will be shown). Which, as discussed in [12], [13] and [15], presents the major issue of developing a stochastic integral of this type, in that "reasonable" stochastic integration is possible only w.r.t. semi-martingales. Since fBm is not a semi-martingale, it can be expected that stochastic integrals w.r.t. this motion are not continuous. To begin this construction, we first introduce a formal definition of fBm.

\begin{mydef}\label{fbmdef}
A Gaussian Process $B^{(H)}_t = \{B^{(H)}(t), t \geq 0 \}$ is called a fractional Brownian motion (fBm) of Hurst index $H \in (0,1)$ if it has mean zero with covariance function
 
\begin{equation}
\label{fBmcov}
R_H (t,s) = E[B^{(H)}(t)B^{(H)}(s)] = \frac{1}{2}(t^{2H} + s^{2H} - |t - s|^{2H}).
\end{equation}
\end{mydef}

\vspace{5mm}

Similar to Brownian motion, fBm is, by definition, a Gaussian process, and therefore it is strictly characterized by its mean and covariance. Its mean by definition is zero and covariance given by $R_H (t,s).$ Hence the following three properties are obtained through $R_H (t,s)$ in definition \ref{fbmdef}.

\begin{enumerate}
	\item Self-similarity: The process $\{a^{-H}B^{H}(at), t \geq 0 \}$ has the same law as $\{B^{H}(t), t \geq 0 \},$ i.e. $a^{-H}B^{H}(at) \sim B^{H}(t).$
	\item Stationary increments: $B^{H}(t + s) - B^{H}(s) \sim B^{H}(t)$  for $s,t \geq 0.$
	\item Variance: $E[B^{H}(t)^2] = t^{2H}$ for all $t \geq 0.$ 
\end{enumerate}

\begin{proof}
\begin{enumerate}
	\item By definition $E[B^H (t)] = 0$ and hence $E[a^{-H}B^{(H)}(at)] = 0$ also. Thus to show that both processes have the same probability distribution, it is sufficient enough to show that they both have the same covariance. Let $a > 0$ and $s,t \geq 0.$ 
	$$E[a^{-H}B^{H}(at)a^{-H}B^{H}(as)]$$
	  $$=  a^{-2H}E[B^{H}(at)B^{H}(as)]$$
	 $$= \frac{1}{2}a^{-2H}[(at)^{2H} + (as)^{2H} - |at - as|^{2H}]$$
	  $$= \frac{1}{2}a^{-2H}a^{2H}[(t)^{2H} + (s)^{2H} - |t - s|^{2H}]$$
	 $$= \frac{1}{2}[t^{2H} + s^{2H} - |t - s|^{2H}]$$
	 $$= E[B^{H}(t)B^{H}(s)].$$
And thus fBm is self-similar.

	\item Again, since $E[B^{(H)}(t + s) - B^{(H)}(t)]$ is clearly zero, it suffices to show that both processes have equal covariance. Let $r,s,t, \geq 0,$ then\\
	
	$E[(B^{H}(t + s) - B^{H}(s))(B^{H}(r + s) - B^{H}(s))]$
	\begin{align*}
	& = E[(B^{H}(t + s)(B^{H}(r + s)] - E[(B^{H}(t + s)B^H (s)]\\
	& -E[B^H (s)B^{H}(r + s)] + E[B^H (s)B^H(s)]\\
	& = \frac{1}{2} \Big\{ (t+s)^{2H} + (r+s)^{2H} - |t-r|^{2H} - \big[ (t+s)^{2H} + s^{2H} - t^{2H} \big]\\  	& - \big[ s^{2H} + (r+s)^{2H} - r^{2H} \big] +  \big[ s^{2H} + s^{2H} \big] \Big\}\\
	& = \frac{1}{2} \Big\{ t^{2H} + s^{2H} - |t-r|^{2H} \Big\}\\
	& = E[B^H(t)B^H(r)].
	\end{align*}
Hence fBm has stationary increments.

	\item $E[B^{H}(t)^2] = E[B^{H}(t)B^H(t)] = \frac{1}{2}(t^{2H} + t^{2H} - |t-t|^{2H}) = t^{2H}.$
\end{enumerate}
\end{proof}

From the self-similarity property of fBm, we get that $B^H(0) = 0$ almost surely. This is shown by taking advantage of that fact that for any $a > 0$ we get that $a0 = 0.$ Hence
$$B^H(0) = B^H(a0).$$
But by property 1, 
$$B^H(a0) \sim a^{-H}B^H(0)$$
 (by $\sim$ we mean they have the same law). Moreover 
 $$B^H(0) \sim a^{-H}B^H(0),$$
 It follows that since this is true for all $a > 0,$ $B^H(0) = 0$ a.s.\\
 
 Now we know by definition that the mean of fBm is zero. But let's take advantage of the self-similarity of fBm and that it has stationary increments to explicitly show that indeed $E[B^H(t)]=0$ for all $t \geq 0.$ To show this, we first use the self-similarity of fBm,
 $$E[B^H(2t)] = 2^H E[B^H(t)].$$
 However, since $B^H(t)$ has stationary increments,
 $$E[B^H(2t)] = E[B^H(2t) - B^H(t)] + E[B^H(t)].$$
 Hence, 
 $$2^H E[B^H(t)] = 2E[B^H(t)].$$
 But $H \in (0,1),$ which means $E[B^H(t)]$ must be zero.\\
 
 Independence of Brownian motion paths is an important property that distinguishes itself from fBm, which as it turns out, does not have independent increments. To show this we assume that fBm does have independent increments. In doing so, we will see that the only way for this to be possible is for $H= 1/2.$ To continue, we assume fBm has independent increments, hence it would be the case for $0 < s < t$ that
 $$E[B^H(s) (B^H(t) - B^H(s))] = E[B^H(s)] E[B^H(t) - B^H(s)] = 0, $$
 since $B^H(t)$ has mean zero.  Moreover, by definition \ref{fbmdef} we get
 \begin{align*}
 E\big[B^H(s) (B^H(t) - B^H(s))\big] 
 & = E\big[B^H(s)B^H(t)\big] - E\big[B^H(s)B^H(s)\big]\\
 & = \frac{1}{2} \big( t^{2H} + s^{2H} - (t-s)^{2H} - 2s^{2H} \big)\\
 & = \frac{1}{2} \big( t^{2H} - s^{2H} - (t-s)^{2H} \big).
 \end{align*}
 But then this means that 
 $$\frac{1}{2} \big( t^{2H} - s^{2H} - (t-s)^{2H} \big) = 0.$$
 However, since $s,t \neq 0,$ we get that this can only be true when $H = 1/2.$
 Therefore we have shown that whenever $H \neq 1/2,$ that $B^H(t)$ in fact does not have independent increments.
 
 Let's now explore the correlation between the increments of fBm and use this to present the long-range dependence property of fBm. From \eqref{fBmcov}, it is easily shown that\\
 
 $E\Big[\big(B^H(t) - B^H(s)\big)\big(B^H(u) - B^H(v)\big)\Big]$
 
 \begin{align}
 \label{corr}
 = \frac{1}{2} \big( |s-u|^{2H} + |t-v|^{2H} - |t-u|^{2H} - |s-v|^{2H} \big).
 \end{align}
 
 For $H \in (0,1/2) \cup (1/2,1),$ $\alpha = H - 1/2$ and $t_1 < t_2 < t_3 < t_4,$ it follows from \eqref{corr} that \\
 
 $E\Big[\big(B^H(t_4) - B^H(t_3)\big)\big(B^H(t_1) - B^H(t_1)\big)\Big]$
\begin{align*} 
& =  \frac{1}{2} \big[(t_3-t_2)^{2H} - (t_3-t_1)^{2H} - (t_4-t_2)^{2H} + (t_4-t_1)^{2H}\big]\\
& = \frac{1}{2} \big[(t_3-t_2)^{2H} - (t_3-t_1)^{2H}\big] -\frac{1}{2}(t_4-t_2)^{2H} - (t_4-t_1)^{2H}\big]\\
& = H \int^{t_2}_{t_1} (t_4 - v)^{2H-1} dv - H \int^{t_2}_{t_1} (t_3 - v)^{2H-1} dv\\
& = (2H-1)H \int^{t_2}_{t_1} \int^{t_4}_{t_3} (u - v)^{2H-2} dudv\\
& = 2\alpha H \int^{t_2}_{t_1} \int^{t_4}_{t_3} (u-v)^{2\alpha -1} du dv.
 \end{align*}
 Hence, $\alpha > 0$ whenever $H \in (1/2, 1),$ and so increments of fBm are positively correlated. Furthermore, $\alpha < 0$ whenever $H \in (0, 1/2),$ thus increments of fBm are negatively correlated.\\
 
 Define $X(n) = B^H(n+1) - B^H(n),$ $n \geq 1.$ Then clearly $X(n)$ is a Gaussian stationary sequence with unit variance. Moreover the covariance function of $X(t)$ is 
 \begin{align*}
 r_H (n) 
 & = E[X(0) X(n)]\\
 & = \frac{1}{2} \big( (n+1)^{2H} - 2n^{2H} + (n-1)^{2H} \big).
 \end{align*}
 
 If $H=1/2$ then we get that $r(n) = 0$ implying that the increments of $X(n)$ are uncorrelated. But, if $H \neq 1/2,$ we get that as $n$ tends to infinity
 $$r_H (n) \sim H(2H-1)n^{2H-2}.$$
Thus we get 
\begin{enumerate}
	\item If $0 < H < \frac{1}{2}$ then $\displaystyle \sum \limits_{n = 0}^{\infty} |r_H(n)| < \infty.$
	\item If $H = \frac{1}{2}$ then \{X(n)\} is uncorrelated.
	\item If $\frac{1}{2} < H < 1$ then $\displaystyle \sum \limits_{n = 0}^{\infty} |r_H(n)| = \infty.$
\end{enumerate} 
Whenever a process has this property, when  $\displaystyle \sum \limits_{n = 0}^{\infty} |r_H(n)| = \infty,$ as in case $3,$ we say that it has long-range dependence.

\begin{proposition}
Let $H \in (0, 1).$Sample paths of $B^H(t)$ are not differentiable.
\end{proposition}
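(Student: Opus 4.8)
The plan is to fix a point $t_0 \ge 0$ and show that, with probability one, the difference quotients of $B^H$ at $t_0$ fail to converge to a finite limit; since a function that is non-differentiable at even a single point is not a differentiable function, this yields the claim for almost every sample path. The engine of the argument is the combination of the stationary-increment and self-similarity properties already established, which together let me compute the law of the difference quotient exactly.

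First I would set $\xi_h = \big(B^H(t_0+h) - B^H(t_0)\big)/h$ for $h > 0$. By the stationary-increment property, $B^H(t_0+h) - B^H(t_0)$ has the same law as $B^H(h)$, and by self-similarity applied at scaling parameter $a = h$ (so that $B^H(h) \sim h^H B^H(1)$), I obtain $\xi_h \stackrel{d}{=} h^{H-1} B^H(1)$, where $B^H(1)$ is a nondegenerate mean-zero Gaussian with variance $1$ by the variance property. Next I would use $H \in (0,1)$, so that $H - 1 < 0$ and $h^{H-1} \to \infty$ as $h \to 0^+$. For any fixed $M > 0$, the distributional identity gives $P(|\xi_h| > M) = P\big(|B^H(1)| > M h^{1-H}\big) \to P(|B^H(1)| > 0) = 1$, since $h^{1-H} \to 0$. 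Hence $|\xi_h| \to \infty$ in probability along $h \to 0^+$, for instance along the sequence $h_n = 1/n$.

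Finally, I would derive the contradiction with differentiability. If the path were differentiable at $t_0$, then $\xi_{h_n}$ would converge almost surely to a finite random variable $L$, hence also in probability; but a sequence converging in probability to a finite limit is bounded in probability, which is incompatible with $P(|\xi_{h_n}| > M) \to 1$ for every $M$. Concretely, one chooses $M$ with $P(|L| > M/2) < 1/4$, whence for large $n$ one gets $P(|\xi_{h_n}| > M) \le P(|\xi_{h_n} - L| > M/2) + P(|L| > M/2) < 1/2$, contradicting the limit being $1$.

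The delicate point — and the one I would state most carefully — is exactly this last bridge. The distributional blow-up of a single marginal does not, by itself, forbid differentiability of the path; the argument must pass through the observation that pathwise differentiability at $t_0$ forces almost-sure, and therefore in-probability, convergence of the difference quotients to a finite limit, and it is this in-probability convergence that collides with their divergence in probability. Everything else is a direct bookkeeping application of the three properties of $B^H$ proved above, so I expect the scaling computation to be routine and this probabilistic bridge to carry the real content of the proof.
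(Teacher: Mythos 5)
Your scaling computation is exactly the paper's: both arguments reduce the difference quotient at a fixed point to $h^{H-1}B^H(1)$ in law via stationary increments and self-similarity, and both observe that $P(|\xi_h|>M)=P\big(|B^H(1)|>Mh^{1-H}\big)\to 1$ for every $M>0$. The gap is in the final bridge that you yourself flagged as carrying the real content. As written, your contradiction argument assumes that differentiability at $t_0$ holds \emph{almost surely} (``$\xi_{h_n}$ would converge almost surely to a finite random variable $L$'') and refutes that assumption; this only shows $P(D)<1$ for $D=\{\omega:\lim_{h\to 0^+}\xi_h(\omega)\ \text{exists finitely}\}$, whereas the proposition requires $P(D)=0$. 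Divergence of $\xi_{h_n}$ in probability is, a priori, compatible with almost-sure convergence on an event of probability $1/2$, so ruling out ``differentiable a.s.'' is strictly weaker than what is claimed.

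The repair is short but must be made. One route is to restrict to $D$: if $P(D)=p>0$, then on $D$ the quantities $|\xi_{h_n}|$ converge pointwise to a finite limit, so $\sup_n|\xi_{h_n}|<\infty$ on $D$ and one can choose $M$ with $P\big(\{\sup_n|\xi_{h_n}|>M\}\cap D\big)<p/2$; on the other hand $P\big(\{|\xi_{h_n}|>M\}\cap D\big)\ge P(|\xi_{h_n}|>M)-(1-p)\to p$, a contradiction. The paper takes a different but equivalent route: it forms the nested decreasing events $A(t_n)=\big\{\sup_{0<s\le t_n}|B^H(s)/s|>d\big\}$, bounds $P(A(t_n))$ below by $P\big(|B^H(1)|>t_n^{1-H}d\big)\to 1$, and uses continuity of measure along the decreasing sequence to conclude $P\big(\bigcap_n A(t_n)\big)=1$, i.e.\ the limsup of the difference quotients exceeds $d$ almost surely for every $d$, hence is infinite almost surely. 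Either patch upgrades your in-probability divergence to the required probability-zero statement; without one of them the argument stops one step short of the claim. (Both your argument and the paper's, for what it is worth, establish non-differentiability at a fixed point almost surely, which is weaker than almost-sure nowhere-differentiability; that stronger statement needs additional work in either approach.)
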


\begin{proof}
Recall that the definition of the derivative of a function $f(x)$ at $x_0$ is 
\begin{align*}
\displaystyle \lim \limits_{h \rightarrow 0} \frac{f(x_0 + h) - f(x_0)}{h}.
\end{align*}

And that $f(x)$ is differentiable at $x_0$ if this limit exists. But by the self similarity property of $B^H(t),$ $B^H(t + h) - B^H(t)$ has the same law as $B^H(h).$ Hence, consider the even
\begin{align*}
A(t, \omega) 
& = \bigg\{ \displaystyle \sup_{0 \leq s \leq t} \bigg| \frac{B^H(h)}{h} \bigg| > d \bigg\}.
\end{align*}

Then, we have that 
$$A(t_{n+1}, \omega) \subseteq A(t_n , \omega)$$
for a decreasing sequence $\{ t_n \}$ to zero. Moreover,
$$A(t_n , \omega) \supseteq \Big( \bigg| \frac{B^H(t_n)}{t_n} \bigg| > d \Big) = \big( |B^H(1)| > t^{1-H}_n d \big).$$
But, 
$$\displaystyle \lim \limits_{n \rightarrow \infty} P\big[|B^H(1)| > t^{1-H}_n d \big] = 1.$$
Since this is true for any $d,$ it must be the case that the derivative does not exist at any point along any sample path of $B^H(t).$

\end{proof}
 
When two random processes $\{X(t)\}$ and $\{Y(t)\}$ satisfy $P[ X(t) = Y(t)$ for all $t \geq 0] = 1$,
we say that one is a modification of the other. It is known that Brownian motion has a modification, the sample paths of which, as discussed in earlier sections, are {\Holder} continuous almost surely, but, these sample paths are nowhere differentiable. It turns out that this is also true for fractional Brownian motion for any value of $H \in (0,1)$ as well. To classify just how continuous fBm is, we give the following Lemma, known as A general version of Kolmogorov's criterion.  We will then use this to show that fBm has a modification, in which the sample paths are {\Holder} continuous.

\begin{lemma}\label{kolcrit}
If a stochastic process $\{ X(t) \}$ satisfies
\begin{align}
E[|X(t) - X(s)|^{\delta}] \leq C|t-s|^{1 + \epsilon},
\end{align}
for all $t,s$ and for some $\delta > 0$, $\epsilon > 0$ and $C > 0,$ then $\{ X(t) \}$ has a modification, the sample paths of which are {\Holder} continuous of order $\gamma \in [0, \epsilon/\delta ).$
\end{lemma}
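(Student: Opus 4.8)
The plan is to use the standard Borel--Cantelli plus dyadic chaining argument that underlies Kolmogorov's criterion. I would work on a bounded interval, say $[0,1]$ (the general case follows by covering $[0,T]$ with finitely many unit intervals), and restrict attention to the dyadic rationals $D = \bigcup_n D_n$, where $D_n = \{ k/2^n : 0 \leq k \leq 2^n \}$. The first step is to convert the moment hypothesis into a tail bound: by Markov's inequality applied to $|X(t)-X(s)|^\delta$,
$$P\big[\, |X(t) - X(s)| > |t-s|^\gamma \,\big] \leq \frac{E[\,|X(t)-X(s)|^\delta\,]}{|t-s|^{\gamma\delta}} \leq C\,|t-s|^{1+\epsilon - \gamma\delta},$$
for any fixed $\gamma \in [0, \epsilon/\delta)$.

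Next I would apply this at the dyadic scale. Taking $s = k/2^n$ and $t = (k+1)/2^n$ gives, for each of the $2^n$ adjacent pairs at level $n$, a bound of order $2^{-n(1+\epsilon-\gamma\delta)}$; summing over the $2^n$ pairs, the probability that \emph{some} adjacent increment at level $n$ exceeds $2^{-\gamma n}$ is at most $C\,2^{-n(\epsilon-\gamma\delta)}$. Since $\gamma < \epsilon/\delta$ forces $\epsilon - \gamma\delta > 0$, these probabilities are summable in $n$, so Borel--Cantelli yields an almost surely finite random index $N(\omega)$ such that for all $n \geq N$ every adjacent dyadic increment at level $n$ is bounded by $2^{-\gamma n}$.

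The key step --- and the one I expect to be the main obstacle --- is the chaining argument that upgrades this control of nearest-neighbor increments into a genuine \Holder\ bound over all dyadic pairs. Given $s, t \in D$ with $s < t$ and $2^{-(m+1)} < t - s \leq 2^{-m}$ for some $m \geq N$, one expresses the increment from $s$ to $t$ as a finite telescoping sum through intermediate dyadic points read off from the binary expansions, in such a way that at each level $\ell \geq m$ only a bounded number of adjacent increments is used. Summing the level-$\ell$ bounds $2^{-\gamma \ell}$ over $\ell \geq m$ gives a convergent geometric series, whence $|X(t) - X(s)| \leq K\,2^{-\gamma m} \leq K'\,|t-s|^\gamma$ with a deterministic constant $K'$ depending only on $\gamma$. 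This shows that $X$ restricted to $D$ is almost surely \Holder\ continuous of order $\gamma$, and in particular uniformly continuous on the dense set $D$.

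Finally I would construct the modification by continuous extension: on the event of full probability where $X|_D$ is uniformly continuous, set $\tilde X(t) = \lim_{s \to t,\, s \in D} X(s)$, which is well defined and inherits the \Holder\ bound of order $\gamma$; off this event set $\tilde X \equiv 0$. To verify that $\tilde X$ is a modification, note that the moment hypothesis forces $X(s) \to X(t)$ in $L^\delta$, hence in probability, as $s \to t$; choosing a dyadic sequence $s_j \to t$ with $X(s_j) = \tilde X(s_j) \to \tilde X(t)$ almost surely and comparing the two limits in probability gives $P[\tilde X(t) = X(t)] = 1$ for each fixed $t$, which is exactly the claim.
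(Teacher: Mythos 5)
The paper does not prove this lemma itself but defers to reference [25]; your argument is the standard Borel--Cantelli plus dyadic chaining proof of Kolmogorov's continuity criterion, which is precisely the proof found there. Your outline is correct and complete: the Markov bound, the summability coming from $\epsilon - \gamma\delta > 0$, the telescoping through dyadic levels with a bounded number of adjacent increments per level, and the identification of the continuous extension as a modification via convergence in probability are all in order.
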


A proof can be found in [25].

\begin{theorem}
Fractional Brownian motion $\{ B^H(t)\}$ has a modification, the sample paths of which are {\Holder} continuous of order $\beta \in [0, H ).$ 
\end{theorem}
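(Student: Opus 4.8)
The plan is to apply the general Kolmogorov criterion, Lemma \ref{kolcrit}, so that the entire task reduces to producing a moment estimate of the form $E[|B^H(t) - B^H(s)|^{\delta}] \leq C|t-s|^{1+\epsilon}$ with a favorable ratio $\epsilon/\delta$. The key structural input is that fBm is Gaussian with stationary increments, which makes every increment a centered Gaussian whose variance is known explicitly, and hence all of its absolute moments are computable up to a dimensionless constant.

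First I would identify the law of a single increment. By the stationary-increments property established above, $B^H(t) - B^H(s)$ has the same law as $B^H(|t-s|)$, and by the variance property $E[B^H(|t-s|)^2] = |t-s|^{2H}$. Since fBm is Gaussian, $B^H(t) - B^H(s)$ is therefore a centered Gaussian random variable with variance $|t-s|^{2H}$. For a centered Gaussian $Z$ with variance $\sigma^2$ one has $E[|Z|^{\delta}] = c_{\delta}\, \sigma^{\delta}$, where $c_{\delta} = E[|N|^{\delta}]$ is the $\delta$-th absolute moment of a standard normal variable $N$ and is finite for every $\delta > 0$. Applying this with $\sigma = |t-s|^{H}$ gives
$$E[|B^H(t) - B^H(s)|^{\delta}] = c_{\delta}\, |t-s|^{H\delta}$$
for all $\delta > 0$.

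Next I would match this against the hypothesis of Lemma \ref{kolcrit} by writing $1 + \epsilon = H\delta$, that is $\epsilon = H\delta - 1$, which is legitimate provided $\delta > 1/H$ so that $\epsilon > 0$; taking $C = c_{\delta}$ the criterion is satisfied. The lemma then yields a modification whose paths are \Holder\ continuous of every order $\gamma < \epsilon/\delta = H - 1/\delta$. Because moments of all orders are available, $\delta$ may be taken arbitrarily large, and $H - 1/\delta \uparrow H$ as $\delta \to \infty$; thus for any prescribed $\beta < H$ one chooses $\delta > 1/(H-\beta)$ to obtain a modification that is \Holder\ continuous of order $\beta$.

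The main obstacle is the final limiting step: each value of $\delta$ supplies its own modification, and one must argue that these amalgamate into a single modification that is simultaneously \Holder\ of every order $\beta < H$. This is handled by the standard fact that two continuous modifications of the same process are indistinguishable, so the continuous modification produced for a large $\delta$ is, almost surely, identical to those for smaller $\delta$; fixing one such modification and exhausting $\beta \uparrow H$ along a countable sequence $\delta_k \to \infty$ then delivers \Holder\ continuity of every order strictly below $H$ on a single almost-sure event. The remaining points are routine: confirming finiteness of the Gaussian absolute moments and checking that the regime $\delta > 1/H$ is nonempty, both of which are immediate since $H \in (0,1)$.
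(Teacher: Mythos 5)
Your proposal is correct and follows essentially the same route as the paper: reduce to Kolmogorov's criterion (Lemma \ref{kolcrit}) via the moment bound $E[|B^H(t)-B^H(s)|^{\delta}] = c_{\delta}|t-s|^{H\delta}$ and let the moment order grow, which with $\delta = 1/\gamma$ reproduces the paper's exponents $1+\epsilon = H/\gamma$ and $\epsilon/\delta = H-\gamma$ exactly. Your extra care in amalgamating the modifications obtained for different $\delta$ into a single one is a point the paper passes over silently, and your derivation of the moment scaling from Gaussianity rather than from self-similarity is an equivalent justification of the same identity.
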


\begin{proof}
Let $0 < \gamma < H.$ Then it follows from the self-similarity of fBm and since it has stationary increments that 
\begin{align*}
E\big[ |B^H(t) - B^H(s)|^{1/\gamma} \big] 
& = E\big[ |B^H(|t-s|)|^{1/\gamma} \big]\\
& = |t-s|^{H/\gamma}E\big[ |B^H(1)|^{1/\gamma} \big]. 
\end{align*}
Thus we have satisfied the conditions in Lemma \ref{kolcrit} where $\delta = 1/\gamma$ and $1 + \epsilon = H/\gamma.$ Moreover, by Kolmogorov's criterion, $B^H(t)$ has a modification that is {\Holder} continuous of order $\beta \in [0, H - \gamma).$ But, since $\gamma$ can be as small as we like, it follows that $\beta \in [0, H).$
\end{proof}

The definition of the {\Ito} integral is a direct consequence of the martingale property of Brownian motion. But fBm does not exhibit this property, in fact, fBm is not even a semi-martingale. There are many different proofs revealing this fact (a rather nice one is given in [13]). We state the theorem and present a simple proof here. But first, we need to find the $p$-variation of $B^H(t).$

\begin{mydef}
The $p$-variation of a random process $X(t)$ over the interval $[0,T]$ is 

\begin{align}
\label{pv}
V_p(X, [0,T]) = \displaystyle \sup \limits_{\pi}  \displaystyle \sum \limits_{i = 1}^{n} |X(t_i) - X(t_{i-1})|^p, 
\end{align}  

where $\pi$ is a finite partition over $[0,T].$ The index of the $p$-variation is defined to be

\begin{align}
\label{ipv}
I(X,[0,T]) = inf\big\{p > 0; V_p(X, [0,T]) < \infty \big\}.
\end{align}
\end{mydef}

\begin{lemma}
$I(B^H(t),[0,T]) = \frac{1}{H}.$ Moreover, $V_p(B^H(t), [0,T]) = 0$ when $pH>1$ and $V_p(B^H(t), [0,T]) = \infty$ when  $pH<1.$
\end{lemma}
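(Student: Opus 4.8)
The plan is to reduce the $p$-variation sums to a single stationary sequence, the fractional Gaussian noise considered above, and then to read off the three regimes from one scaling exponent. Throughout I read the relevant sums as limits along partitions whose mesh tends to zero — taking uniform partitions suffices, and this is what produces the values $0$ and $\infty$. Concretely, fix the uniform partition $t_i = iT/n$, $i = 0,1,\dots,n$, of $[0,T]$ and set $V_p^{(n)} = \sum_{i=1}^n |B^H(t_i) - B^H(t_{i-1})|^p$. The first step is to observe that, by the self-similarity property with scaling constant $a = T/n$ together with the stationarity of the increments, the normalized increments $(T/n)^{-H}\big(B^H(t_i) - B^H(t_{i-1})\big)$, $i = 1,\dots,n$, have the same joint law as $(\xi_1,\dots,\xi_n)$, where $\xi_i = B^H(i) - B^H(i-1)$ is the unit-variance stationary Gaussian sequence studied above. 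This yields the distributional identity
$$ V_p^{(n)} \overset{d}{=} T^{pH}\, n^{1-pH}\cdot \frac{1}{n}\sum_{i=1}^n |\xi_i|^p. $$

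The second step is to control the average $\frac{1}{n}\sum_{i=1}^n |\xi_i|^p$. Since $B^H(1)$ is centered Gaussian with variance $1^{2H}=1$, the constant $c_p := E[|\xi_1|^p] = E[|B^H(1)|^p]$ is finite and strictly positive for every $p>0$. I would then apply the strong law of large numbers to the stationary sequence $\{|\xi_i|^p\}_{i\ge 1}$: the covariance of the noise satisfies $r_H(n) \sim H(2H-1)n^{2H-2} \to 0$, as established earlier, and a stationary Gaussian sequence whose covariance vanishes at infinity is mixing, hence ergodic; Birkhoff's ergodic theorem then gives $\frac{1}{n}\sum_{i=1}^n |\xi_i|^p \to c_p$ almost surely.

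The final step is to extract the three cases from the prefactor $n^{1-pH}$. If $pH>1$ then $n^{1-pH}\to 0$, so $V_p^{(n)} \to 0$; if $pH<1$ then $n^{1-pH}\to\infty$, so $V_p^{(n)} \to \infty$; the critical exponent $pH=1$ gives the finite positive limit $T\,c_{1/H}$. Because finiteness of the variation therefore switches exactly at $p = 1/H$, the index $I(B^H,[0,T]) = \inf\{p>0 : V_p(B^H,[0,T]) < \infty\}$ equals $1/H$, which completes the proof.

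I expect the divergence case $pH<1$ to be the main obstacle. A plain moment computation gives $E[V_p^{(n)}] = c_p\,T^{pH} n^{1-pH} \to \infty$, but this only shows that the mean blows up, not that the sum itself diverges; upgrading to genuine divergence is exactly what forces the law of large numbers, and hence the ergodicity of the fractional Gaussian noise, into the argument. By contrast the case $pH>1$ is soft: $E[V_p^{(n)}] \to 0$ already gives convergence of $V_p^{(n)}$ to $0$ in $L^1$, and hence in probability, with no appeal to ergodicity, so I would record that as a lighter route for that half of the statement.
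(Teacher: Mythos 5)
The paper itself does not prove this lemma: it cites [1] and offers only the moment heuristic $E[|B^H(t_i)-B^H(t_{i-1})|^p]=E[|B^H(1)|^p]\,|t_i-t_{i-1}|^{pH}$. Your proposal is therefore a genuine upgrade rather than a restatement, and your diagnosis of the heuristic is exactly right: for $pH>1$ the vanishing of $E[V_p^{(n)}]$ already gives $V_p^{(n)}\to 0$ in $L^1$, whereas for $pH<1$ a blowing-up mean proves nothing, and the self-similarity reduction $V_p^{(n)}\overset{d}{=}T^{pH}n^{1-pH}\cdot\frac{1}{n}\sum_{i=1}^n|\xi_i|^p$ combined with the ergodic theorem for fractional Gaussian noise (mixing because $r_H(n)\to 0$, so Birkhoff gives $\frac{1}{n}\sum|\xi_i|^p\to c_p>0$ a.s.) is the standard rigorous route — essentially the argument of the cited reference. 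Two caveats are worth recording. First, the scaling identity is an equality in law for each fixed $n$ separately; the two sides are not almost surely equal, so what your argument literally delivers is $V_p^{(n)}\to 0$ (resp.\ $\to\infty$) in probability, not pathwise — acceptable, and how the result is usually stated, but it should be said. Second, the paper's definition of $V_p$ is a supremum over all finite partitions, under which $V_p=0$ is impossible for a nonconstant path; your reinterpretation as a limit along vanishing-mesh partitions is the right reading of the "$=0$" and "$=\infty$" claims, but then the identification $I(B^H,[0,T])=1/H$ under the supremum definition needs one extra (easy) ingredient for the finiteness half: $\beta$-H\"older continuity of the paths for every $\beta<H$ bounds the supremum $p$-variation whenever $p>1/H$, while your uniform-partition computation supplies the divergence for $p<1/H$. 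With those two sentences added, the proof is complete and considerably more informative than what the paper provides.
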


A proof can be found in [1].

This can be seen when we take into consideration that 
$$E[|B^H(t_i) - B^H(t_{i-1})|^p] = E[|B^H(1)|^p] |t_i - t_{i-1}|^{pH},$$
and plugging this into \eqref{pv} and applying \eqref{ipv}.

\begin{theorem} \label{fbmnotsemi}
$\{B^H(t): t \geq 0\},$ for $H \neq 1/2,$ is not semimartingale.
\end{theorem}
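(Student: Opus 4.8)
The plan is to argue by contradiction in each of the two regimes $H\in(1/2,1)$ and $H\in(0,1/2)$, playing the $p$-variation values from the preceding Lemma against the structural constraints forced on any continuous semimartingale. Since fBm admits a continuous modification (shown above), it suffices to rule out $B^H$ being a \emph{continuous} semimartingale. Recall that such a process decomposes as $X=M+A$ with $M$ a continuous local martingale and $A$ a continuous finite-variation process, that its quadratic variation $\langle X\rangle$ is almost surely finite and equals $\langle M\rangle$ (the finite-variation part $A$ contributing nothing), and that $\langle M\rangle=0$ forces $M$ to be constant. Here I read the quadratic variation as the usual bracket, i.e.\ the limit in probability of $\sum_i (X_{t_{i+1}}-X_{t_i})^2$ along partitions of vanishing mesh, which is the sense in which the Lemma's $2$-variation value is to be applied.

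First I would treat $H>1/2$. Then $2H>1$, so the Lemma gives $V_2(B^H,[0,T])=0$; that is, the quadratic variation of $B^H$ vanishes. For a continuous semimartingale this forces $\langle M\rangle=\langle X\rangle=0$, hence $M$ is constant and $X=A$ has finite variation, so in particular its strong $1$-variation is finite. But for every $H\in(0,1)$ one has $1\cdot H=H<1$, so the same Lemma yields $V_1(B^H,[0,T])=\infty$, i.e.\ $B^H$ has infinite total variation. These two conclusions are incompatible, so $B^H$ is not a semimartingale when $H>1/2$.

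Next I would treat $H<1/2$. Now $2H<1$, so the Lemma gives $V_2(B^H,[0,T])=\infty$: the quadratic variation is infinite. Since every continuous semimartingale has an almost surely finite bracket $\langle X\rangle_T$, this immediately rules out $B^H$ being a semimartingale. The two cases together exhaust $H\neq 1/2$, completing the argument.

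The step I expect to be the real obstacle is reconciling the Definition's $p$-variation, a supremum over all finite partitions, with the bracket $\langle X\rangle$ of a semimartingale, a limit along refining partitions: taken literally, $\sup_\pi\sum_i(X_{t_{i+1}}-X_{t_i})^2=0$ would force $B^H$ to be constant, which it is not. The care needed is to interpret the Lemma's ``$V_2=0$'' as the vanishing of the mesh-refining quadratic variation, and then to verify that this vanishing genuinely propagates to $\langle M\rangle=0$, while in the case $H<1/2$ the divergence of the $2$-variation is incompatible with a finite bracket. Making this bridge between the pathwise $p$-variation and the probabilistic quadratic variation precise is where the essential work lies.
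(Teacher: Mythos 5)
Your argument is correct and follows essentially the same route as the paper's proof: for $H<1/2$ the infinite quadratic variation rules out a semimartingale, while for $H>1/2$ the vanishing quadratic variation forces the local martingale part to be constant, leaving $B^H$ equal to a finite-variation process, which contradicts its infinite $1$-variation. Your closing caution about reconciling the supremum-over-partitions $p$-variation with the mesh-refining bracket is a genuine subtlety that the paper silently glosses over, but the structure of the two arguments is the same.
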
	

\begin{proof}
A process $\{X(t), \mathcal{F}_t, t \geq 0\}$ is called a semimartingale if it admits the Doob-Meyer decomposition $X(t) = X(0) + M(t) + A(t),$ where $M(t)$ is an $\mathcal{F}_t$ local martingale with $M(0) = 0$, $A(t)$ is a $\it{c\grave{a}dl\grave{a}g}$ adapted process of locally bounded variation and $X(0)$ is $\mathcal{F}_0$-measurable.  Moreover, any semimartingale has locally bounded quadratic variation $[11].$ 

	Now, let $X(t) = B^H(t).$ If $H \in (0, 1/2),$ then $B^H(t)$ cannot even be a martingale since it has infinite quadratic variation, hence, it is not a semimartingale.
	
	If $H \in (1/2, 1)$ then the quadratic variation of $B^H(t)$ is zero. So, let's suppose that it is a semimartingale. Then, $M(t) = B^H(t) - A(t)$ has quadratic variation equal to zero. So, from $[11],$ $M(t) = 0$ for all $t$ $a.s.$  Then that would mean that $B^H(t) = A(t),$ but this can't be the case since $B^H(t)$ has unbounded variation. Hence $B^H(t)$ is not a semimartingale for any $H \neq 1/2.$ 
\end{proof}

Now we show that the fractional Brownian motion can be represented as a stochastic integral. Consider,

\begin{align*}
Z(t) 
&=  \frac{1}{C(H)} \int_{\mathbb{R}} \bigg((t - s)^{H - \frac{1}{2}}_+ - (-s)^{H- \frac{1}{2}}_+ \bigg) dB(s)\\
&= \frac{1}{C(H)} \Bigg( \int^0_{- \infty} ((t - s)^{H - \frac{1}{2}} - (-s)^{H- \frac{1}{2}} ) dB(s) +  \int^{t}_0 ((t - s)^{H - \frac{1}{2}}dB(s) \Bigg)\\
\end{align*}

where $B(t)$ is a standard Brownian motion and 
$$C(H) = \Bigg( \int^0_{- \infty} \bigg[(1 - s)^{H - \frac{1}{2}} - (-s)^{H- \frac{1}{2}} \bigg]^{2} ds + \frac{1}{2H} \Bigg)^{\frac{1}{2}}$$

\begin{proof}

First, notice that $Z(t)$ is a stochastic integral with respect to a standard Brownian motion like that in \eqref{itointegral}, where $f$ is a deterministic function such that $\int |f(x)|^2dx < \infty.$ Hence it must be Gaussian with $E[Z(t)] = 0.$ Moreover, since $Z(t)$ is a Gaussian, we know that it must be strictly characterized by its mean and covariance. Hence, let's show that the covariance of $Z(t)$ is indeed the same given in \eqref{fBmcov}.
Observe that 

\begin{align}
\label{fBmexpand}
E[|Z(t) - Z(s)|^2]  = E[Z(t)^2]  - 2E[Z(t)Z(s)]  + E[Z(s)^2].
\end{align}

To continue we use the property that if $\int_A |f(x)|^2dx < \infty$, $A \subset \mathbb{R},$ then

\begin{align}
\label{fBmisom}
E\Big[\big(\int_A f(x)dB(x)\big)^2\Big] = \int_A f(x)^2dx.
\end{align}

Thus, by \eqref{fBmisom}, we have
\begin{align*}
E[Z(t)^2] 
& =  \frac{1}{C(H)^2} \int_{\mathbb{R}} \Bigg[(t - s)^{H - \frac{1}{2}}_+ - (-s)^{H- \frac{1}{2}}_+ \Bigg]^2 ds\\
& =  \frac{1}{C(H)^2}  t^{2H}\int_{\mathbb{R}} \Bigg[(1 - u)^{H - \frac{1}{2}}_+ - (-u)^{H- \frac{1}{2}}_+ \Bigg]^2 du\\
&= t^{2H}
\end{align*}
where $s=tu.$ Similarly,

\begin{align*}
E[|Z(t) - Z(s)|^2] 
& =  \frac{1}{C(H)^2} \int_{\mathbb{R}} \Bigg[(t - u)^{H - \frac{1}{2}}_+ - (s-u)^{H- \frac{1}{2}}_+ \Bigg]^2 ds\\
& = \frac{1}{C(H)^2} t^{2H}\int_{\mathbb{R}} \Bigg[(t-s-u)^{H - \frac{1}{2}}_+ - (-u)^{H- \frac{1}{2}}_+ \Bigg]^2 du\\
&= |t-s|^{2H} 
\end{align*}

Hence, by \eqref{fBmexpand}, we get

\begin{align*}
E[Z(t)Z(s)] 
& = -\frac{1}{2} \bigg(E[|Z(t) - Z(s)|^2]  - E[Z(t)^2]  - E[Z(s)^2] \bigg)\\ 
& = \frac{1}{2}(t^{2H} + s^{2H} - |t - s|^{2H}).\\
\end{align*}

Hence, $Z(t)$ is a fractional Brownian motion with Hurst index $H$.
\end{proof}

Let's continue the discussion of representing $B^H(t)$ as a stochastic integral and show how fBm is directly related to the fractional calculus we developed earlier. It was just shown above that if $B(t)$ is a  standard Brownian motion on $\mathbb{R},$ then 

\begin{align}
\label{fBmintegral}
Z(t) = \int^{\infty}_{- \infty} f(t,s) dB(s)
\end{align}
is a fractional Brownian motion with $f(t,s)$ defined as above. Let $H \in ( 1/2 , 1)$ and $x < 0 < t,$ then, for all $t \in \mathbb{R},$ we claim that 

\begin{align}
\label{fBmfrac}
 \bigg((t - s)^{H - \frac{1}{2}}_+ - (-s)^{H- \frac{1}{2}}_+ \bigg)
 & = \Gamma(H + 1/2) \Big( I^{H - \frac{1}{2}}_{-} {\bf{1}}_{[0,t]} \Big) (s),
 \end{align}
 
where ${\bf{1}}_{[0,t]}$ is the indicator function.

\begin{align*}
\Longrightarrow  \Big( I^{H - \frac{1}{2}}_{-} {\bf{1}}_{[0,t]} \Big) (s)
& = \frac{1}{\Gamma(H - 1/2)} \int^{\infty}_{x}  {\bf{1}}_{[0,t]} (u) (u-s)^{H- 3/2} du\\
& = \frac{1}{\Gamma(H - 1/2)} \int^{t}_{0} (u-s)^{H- 3/2} du\\ 
& =  \frac{1}{\Gamma(H + 1/2)} \Big[ (t - s)^{H - 1/2} - (-s)^{H - 1/2} \Big]
\end{align*}

Thus, plugging \eqref{fBmfrac} into \eqref{fBmintegral}, we get

\begin{align*}
Z(t) 
& = \frac{\Gamma(H + 1/2)}{C(H)} \int^{\infty}_{- \infty} \Big( I^{H - \frac{1}{2}}_{-} {\bf{1}}_{[0,t]} \Big) (s) dB(s)\\
\end{align*}

We now continue to informally show how fBm ($H \neq 1/2$) is related to Brownian motion ($H = 1/2$). It is important to remark that we will proceed in a non-rigorous way. We denote the derivative of the standard Brownian motion (a.k.a. white noise) as $B'(t),$ even though this derivative does not exist. In doing so, we can heuristically show how fBm is related to Brownian motion.  In light of this, by the fractional integration by parts formula \eqref{fracibp}, we get
\begin{align*}
Z(t) 
& = \frac{\Gamma(H + 1/2)}{C(H)} \int^{\infty}_{- \infty} \Big( I^{H - \frac{1}{2}}_{-} {\bf{1}}_{[0,t]} \Big) (s) dB(s)\\
& = \frac{\Gamma(H + 1/2)}{C(H)} \int^{\infty}_{- \infty} {\bf{1}}_{[0,t]} (s) \Big( I^{H - \frac{1}{2}}_{+} B'\Big)(s) ds\\
& = \frac{\Gamma(H + 1/2)}{C(H)} \int^t_0 \Big( I^{H - \frac{1}{2}}_{+} B'\Big)(s) ds\\
& = \frac{\Gamma(H + 1/2)}{C(H)} \Big( I^{H + \frac{1}{2}}_{+} B'\Big)(t). 
\end{align*}
Furthermore, by using the identity property of fractional calculus, we have
\begin{align}\label{fbmrelbm}
 \big( D^{H + \frac{1}{2}}_{+} Z\big)(t) 
 & =  \frac{\Gamma(H + 1/2)}{C(H)} B'(t).
\end{align}
The expression given in \eqref{fbmrelbm} is then meaningless, considering $B'(t)$ does not exist. Although, recall that from Section \eqref{fraccalc}, we can write $B'(t)$ as $\big(D^1_+ B\big)(t)$ and hence can express \eqref{fbmrelbm} as
\begin{align}\label{fbmbmmeaning}
Z(t)
& = \frac{\Gamma(H + 1/2)}{C(H)} \big( I^{H - \frac{1}{2}}_{+} B\big)(t).
\end{align}
This means that $Z(t)$ can be obtained by integrating $B(t)$ over the real line $H - 1/2$ times. Recall that the expression given in \eqref{fbmbmmeaning} was derived by assuming the existence of $B'(t),$ which in actuality does not exist. Currently there is no theory giving meaning to $B'(t)$ and hence making \eqref{fbmbmmeaning} meaningless. In conclusion, assuming that we can somehow give some formal meaning to $B'(t),$ a fractional Brownian motion with Hurst index $H$ can be obtained by integrating $B(t)$ over the real line $H - 1/2$ times.

\section{Pathwise Integration for Fractional Brownian Motion}\label{fbmstochcalc}

Now that we have developed some elementary properties of fBm, our next aim is to define stochastic integrals of the form

\begin{align}
\label{fBmsi}
\int^T_0 f(t, \omega ) dB^H(t),
\end{align}
where $B^H(t)$ is a fractional Brownian motion and $f(t, \omega )$ is a stochastic process.  When we constructed the {\Ito} integral with respect to Brownian motion, we took advantage of its martingale property. But as we have shown, fBm does not exhibit this property, which by the {\it{Bichteler-Dellacherie}} {\it{Theorem}}, aids strongly against the construction of an integral with respect to fBm.  

\begin{mydef} \label{goodintegrator}
Let $\mathcal{S}$ be the vector space of simple stochastic integrands.  A real-valued c\`{a}dl\`{a}g, adapted process $\{X(t) : t \in [0, T] \}$ is called a good integrator if the integration operator, $I_X : \mathcal{S} \rightarrow L^0(\mathbb{P})$, is continuous, where $L^0(\mathbb{P})$ is the space of all random variables, with the metrizable topology of convergence in probability.
\end{mydef}

In other words, if we take an element $H$ from $\mathcal{S}$, which is of the form
\begin{align*}
H 
& =  \displaystyle \sum \limits^k_{i = 1} H_i \bf{1}_{(t_i , t_{i + 1}]},
\end{align*}
where $k$ is finite and $H_i$ are bounded $\mathcal{F}_{t_i}$-measurable random variables, then by $I_X$ acting on $H$ we get the random variable
\begin{align*}
I_X (H)
& = \displaystyle \sum \limits^k_{i = 1} H_i (X_{t_{i+1}} - X_{t_i}).
\end{align*}
Moreover, for $I_X$ to be continuous, it must be the case that given $\mathbb{P}$ a.e. $S$ and for all $H \in \mathcal{S}$, if $H_n \rightarrow H$, then $I_X(H_N) \rightarrow I_X(H)$ in probability. Hence, by definition \ref{goodintegrator}, good integrators yield a continuous integration operator, but this begs the question as to how one can characterize a good integer in order to insure that this is true.

\begin{theorem} \label{BD}
(Bichteler-Dellacherie) For a real-valued, c\`{a}dl\`{a}g, adapted process $\{X(t) : t \in [0, T] \}$ the following are equivalent:
\begin{enumerate}
\item X(t) is a good integrator.
\item X(t) is a semimartingale. 
\end{enumerate}
\end{theorem}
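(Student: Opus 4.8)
The plan is to establish the two implications separately; the direction $(2) \Rightarrow (1)$ is elementary and rests on the tools already assembled, while $(1) \Rightarrow (2)$ is the substantive part and requires machinery beyond the {\Ito} calculus developed above. To see that a semimartingale is a good integrator, I would write the decomposition $X(t) = X(0) + M(t) + A(t)$ exactly as in the proof of Theorem \ref{fbmnotsemi}, and split the integration operator acting on a simple integrand $H = \sum_i H_i \mathbf{1}_{(t_i, t_{i+1}]}$ as $I_X(H) = I_M(H) + I_A(H)$. The integral $I_A(H)$ is an ordinary pathwise Stieltjes integral, so $|I_A(H)| \leq \|H\|_\infty \, V(A)$ where $V(A)$ is the (locally finite) total variation of $A$; this is manifestly continuous in probability under uniform convergence of the integrands. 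For $I_M(H)$, after localizing to reduce $M$ to a square-integrable martingale, the {\Ito} isometry together with Doob's maximal inequality bounds $\|I_M(H)\|_{L^2}$ in terms of $\|H\|_\infty$ and the bracket of $M$, which gives convergence in probability. Adding the two estimates shows that $H_n \to H$ forces $I_X(H_n) \to I_X(H)$ in probability, so $X$ is a good integrator.

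For the converse, the first step is to distill from the continuity of $I_X$ its one crucial quantitative consequence: the family $\{\, I_X(H) : H \in \mathcal{S},\ \|H\|_\infty \leq 1 \,\}$ is bounded in $L^0(\mathbb{P})$, that is, bounded in probability. This boundedness is essentially the only input that propagates through the rest of the argument. The core of the proof is then to manufacture, out of this boundedness alone, an equivalent probability measure $\mathbb{Q} \sim \mathbb{P}$ under which $X$ admits a decomposition into a $\mathbb{Q}$-martingale plus a finite-variation process. Following the modern treatment, I would work with the discretized increments of $X$, apply a Koml\'os-type subsequence extraction to a sequence of candidate densities, and separate via Hahn-Banach in the duality between $L^\infty$ and $L^1$ to produce a $\mathbb{Q}$ under which these increments behave like (super)martingale increments up to a controlled drift. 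Once $X$ is known to be a $\mathbb{Q}$-semimartingale, the Doob-Meyer decomposition under $\mathbb{Q}$ supplies the local-martingale and bounded-variation parts, and finally the invariance of the semimartingale property under an equivalent change of measure (a result of Girsanov-Jacod type) transfers the conclusion back to $\mathbb{P}$.

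The main obstacle is unmistakably this second direction, and within it the construction of the measure $\mathbb{Q}$ from mere boundedness in probability. The difficulty is that $L^0(\mathbb{P})$ is not locally convex, so one cannot apply a separation argument there directly; the entire art lies in upgrading the weak $L^0$-control to a statement in a genuine dual pair where Hahn-Banach becomes available, which is precisely what the weak-compactness and Koml\'os step accomplish. Since this functional-analytic heart of Bichteler-Dellacherie lies well outside the framework built in the preceding sections, I would state the theorem and refer to Protter, or to the elementary exposition of Beiglb\"ock-Schachermayer-Veliyev, rather than reproduce the full argument here.
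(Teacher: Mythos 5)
The paper offers no proof of this theorem at all: it states it and refers the reader to [27], using it only as a black box to deduce (via Theorem \ref{fbmnotsemi}) that fBm is not a good integrator. So there is no internal argument to compare yours against, and your outline gives strictly more than the paper does. What you sketch is in fact the standard modern proof, essentially that of Beiglb\"{o}ck--Schachermayer--Veliyev (the paper's own reference [27]) and of Protter. The implication $(2)\Rightarrow(1)$ as you describe it is correct and uses only tools already developed in the paper: the decomposition $X = X(0) + M + A$, the bound $|I_A(H)| \leq \|H\|_\infty V(A)$ for the finite-variation part, and localization plus the {\Ito} isometry of Section \ref{itogral} for the martingale part. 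For $(1)\Rightarrow(2)$ you correctly isolate both the key quantitative consequence of continuity (boundedness in probability of $\{I_X(H) : H \in \mathcal{S},\ \|H\|_\infty \leq 1\}$) and the genuine obstruction (non-local-convexity of $L^0(\mathbb{P})$, which is why the Koml\'{o}s extraction and Hahn--Banach separation in the $L^\infty$--$L^1$ duality are needed to produce the equivalent measure $\mathbb{Q}$). One caution: the final transfer back from $\mathbb{Q}$ to $\mathbb{P}$ via the Girsanov--Jacod invariance of the semimartingale class is itself a nontrivial theorem, so it should be cited explicitly rather than treated as routine. Given the paper's scope, proving the easy direction and deferring the functional-analytic core to [27] is an entirely reasonable division of labor.
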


This theorem explicitly characterizes good integrators as those being semimartingales, thus creating the major difficulty in constructing such an integral with fBm as the integrator.  For further discussion on good integrators and the proof of the {\it{Bichteler-Dellacherie}} {\it{Theorem}} please see [27]. 

\begin{corollary}
Fractional Brownian motion is not a good integrator whenever $H \neq 1/2.$
\end{corollary}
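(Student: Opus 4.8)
The plan is to obtain the corollary directly from the two preceding theorems by a contrapositive argument. The essential observation is that Theorem \ref{BD} asserts a logical \emph{equivalence}: for a real-valued, c\`{a}dl\`{a}g, adapted process, being a good integrator is equivalent to being a semimartingale. Consequently, negating one property negates the other. Since Theorem \ref{fbmnotsemi} already establishes that $B^H(t)$ fails to be a semimartingale whenever $H \neq 1/2$, the equivalence immediately forces $B^H(t)$ to fail to be a good integrator in this regime.

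Before invoking the equivalence, I would first confirm that $B^H(t)$ satisfies the standing hypotheses of the Bichteler-Dellacherie theorem, namely that it is real-valued, c\`{a}dl\`{a}g, and adapted. Real-valuedness is immediate from Definition \ref{fbmdef}, and adaptedness holds with respect to the natural filtration generated by the process. For the c\`{a}dl\`{a}g requirement I would appeal to the modification theorem proved earlier, which shows that fBm admits a version whose sample paths are \Holder{} continuous of order $\beta \in [0,H)$; such paths are in particular right-continuous with left limits, so the c\`{a}dl\`{a}g condition is met.

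With the hypotheses verified, the argument closes in a single line: were $B^H(t)$ a good integrator, Theorem \ref{BD} would force it to be a semimartingale, contradicting Theorem \ref{fbmnotsemi}. Hence $B^H(t)$ is not a good integrator for any $H \neq 1/2$.

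There is essentially no technical obstacle here, since the corollary is a formal consequence of results already in hand. The only point demanding any care is the verification of the structural hypotheses (c\`{a}dl\`{a}g and adapted) that license the application of the Bichteler-Dellacherie equivalence; all of the substantive work was carried out earlier in proving that fractional Brownian motion fails to be a semimartingale.
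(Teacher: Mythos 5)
Your argument is correct and is essentially identical to the paper's own proof, which likewise deduces the corollary immediately from Theorem \ref{BD} combined with Theorem \ref{fbmnotsemi}. Your extra verification that $B^H(t)$ is real-valued, adapted, and (via the \Holder{} continuous modification) c\`{a}dl\`{a}g is a welcome bit of care that the paper omits, but it does not change the route.
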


\begin{proof}
This clearly follows from Theorem \ref{BD} by applying Theorem \ref{fbmnotsemi}.
\end{proof}

Thus, $B^H(t)$ is not a good integrator.  Either way, there have been various [successful] approaches in attempting to define an integral of this type, such as Wick, Stratonovich and pathwise, among others. In this section, we will introduce the pathwise approach and refer to [1] and [14] for further reading on pathwise and other types of stochastic integrals. 

	As shown previous, $B^H(t)$ is {\Holder} continuous of order $\beta \in [0, H),$ so it seems natural one might think to define a stochastic integral like that in \eqref{fBmsi} by using the traditional Riemann sum approach:

\begin{align}
\label{fBmsum}
\displaystyle \sum \limits_{i = 0}^{n-1} f(t_i) [B^H(t_{i+1}) - B^H(t_i)].
\end{align}
The problem with this approach though, is that $B^H(t)$, similar to $B(t),$ is not only non-differentiable, but it has almost surely sample paths of unbounded variation. Thus we are concerned that as we take $n \rightarrow \infty$, \eqref{fBmsum} will diverge (in probability). Even still, as we define a pathwise integral in this way, we take full advantage of the continuous sample paths of fBm, hence notice the similarity in the traditional definition of the derivative and the stochastic integrals defined below.  Since fBm has almost surely sample paths of unbounded variation, we must steer away from a Riemann sum approach and use the more generalized Riemann-Stieltjes definition.

\subsection{Symmetric, Forward and Backward Integrals}\label{sfb-fbm}

\begin{mydef} \label{symmetricfbm}
Let $H \in (0,1)$ and $\{f(t) ; t \in [0,T]\}$ be a process with integrable trajectories. The symmetric integral of $f(t)$ w.r.t. $B^H(t)$ is defined as 

\begin{align} \label{fbmsymmetric}
\int^{T}_{0} f(s) d^{\circ}B^H(s)  = \displaystyle \lim_{\epsilon \rightarrow 0} \frac{1}{2 \epsilon } \int^{T}_{0} f(s) \Big[ B^H(s+ \epsilon ) - B^H(s- \epsilon) \Big] ds,
\end{align}

provided that this limit exists in probability.
\end{mydef}

\begin{mydef} \label{forwardfbm}
Let $H \in (0,1)$ and $\{f(t) ; t \in [0,T]\}$ be a process with integrable trajectories. The forward integral of $f(t)$ w.r.t. $B^H(t)$ is defined as 

\begin{align}
\label{fBmforward}
\int^{T}_{0} f(s) d^{-}B^H(s)  = \displaystyle \lim_{\epsilon \rightarrow 0} \frac{1}{\epsilon } \int^{T}_{0} f(s) \frac{B^H(s+ \epsilon ) - B^H(s)}{\epsilon} ds,
\end{align}

provided that this limit exists in probability. Similarly, the backward integral is defined as
\begin{align}
\int^{T}_{0} f(s) d^{+}B^H(s)  = \displaystyle \lim_{\epsilon \rightarrow 0} \frac{1}{\epsilon } \int^{T}_{0} f(s) \frac{B^H(s- \epsilon ) - B^H(s)}{\epsilon} ds,
\end{align}
provided that the limit exists.
\end{mydef}

As we have defined the stochastic integral in these ways, we are still concerned as to when the convergence of these limits, if any, hold (in probability). And if so, how these definitions are related.  Since we are taking advantage of the {\Holder} continuity of the sample paths of $B^H(t),$ it seems reasonable that the $p$-variation behavior of these sample paths determines the existence of these limits. Indeed, this turns out to be true. In fact, if $H < 1/2,$ the limit in \eqref{fBmforward} does not even exist. 


\begin{theorem}\label{RSexist}
$[26, Theorem$ $6.2]$ Let $B^H(t)$ be a fractional Brownian motion with Hurst index $H.$ Let $u(t) : [0,T] \times \Omega \rightarrow \mathbb{R}$ be a stochastic process having bounded $p$-variation sample paths almost surely such that $p < 1/(1-H).$ Then the integral 
$$\int^T_0 u(s) dB^H(s)$$ exists almost surely in the Riemann-Stieltjes sense.
\end{theorem}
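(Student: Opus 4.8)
The plan is to recognize this as an instance of Young's theorem on Riemann--Stieltjes integration and to argue pathwise. The guiding principle is that $\int_0^T f\,dg$ exists in the Riemann--Stieltjes sense as soon as $f$ has bounded $p$-variation, $g$ has bounded $q$-variation, and the exponents satisfy the complementary Young condition $\frac{1}{p} + \frac{1}{q} > 1$. Taking $f = u$ and $g = B^H$, I would carry out the entire argument for a fixed $\omega$ in a set of full probability, so that the stochastic statement follows from a deterministic one.

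The first step is to fix the exponents. By the lemma on the $p$-variation of fBm, the variation index of $B^H$ equals $1/H$; in particular $V_q(B^H,[0,T]) < \infty$ almost surely for every $q > 1/H$, i.e. for every exponent with $\frac{1}{q} < H$. The hypothesis $p < 1/(1-H)$ gives $\frac{1}{p} > 1 - H$, hence $1 - \frac{1}{p} < H$, so the interval $\left(1 - \frac{1}{p},\, H\right)$ is nonempty. Choosing any $r$ in this interval and setting $q = 1/r$ secures both requirements at once: $q > 1/H$, so $B^H$ has finite $q$-variation almost surely, and $\frac{1}{p} + \frac{1}{q} > 1$, which is the Young condition.

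Next I would pass to the deterministic core. Let $\Omega_0$ be the event on which $u(\cdot,\omega)$ has bounded $p$-variation and $B^H(\cdot,\omega)$ has bounded $q$-variation; since each occurs almost surely, $\mathbb{P}(\Omega_0) = 1$. On $\Omega_0$ I would invoke the Love--Young inequality: for a tagged partition $\pi = \{0 = t_0 < \cdots < t_n = T\}$ with tags $\xi_i \in [t_i, t_{i+1}]$, the sums $S(\pi) = \sum_i u(\xi_i)\big(B^H(t_{i+1}) - B^H(t_i)\big)$ obey a bound of the form $C\, V_p(u)^{1/p}\, V_q(B^H)^{1/q}\, \zeta\!\left(\frac{1}{p} + \frac{1}{q}\right)$ under refinement, with the contribution of each newly inserted point controlled by the local oscillation of $u$. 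Iterating this estimate --- at each stage removing the point of least incremental contribution --- shows that the net $\{S(\pi)\}$ is Cauchy as the mesh of $\pi$ tends to zero. Completeness then delivers the limit, which is by definition the Riemann--Stieltjes integral $\int_0^T u\, dB^H$.

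The main obstacle is the Love--Young maximal inequality itself, which is where the condition $\frac{1}{p} + \frac{1}{q} > 1$ is used essentially: the ``drop the smallest point'' induction produces a telescoping sum that is summable precisely because the associated zeta-type series $\sum_k k^{-(1/p + 1/q)}$ converges. Establishing this cleanly requires the superadditivity of the local-variation controls $(s,t) \mapsto V_p(u,[s,t])$ and $(s,t) \mapsto V_q(B^H,[s,t])$, together with careful tracking of constants. Since the paper attributes the full statement to $[26]$, I would present the exponent bookkeeping above in detail and cite the Love--Young inequality for the analytic heart rather than reproduce its proof.
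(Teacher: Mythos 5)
The paper offers no proof of this theorem at all---it is stated as a direct citation of [26, Theorem 6.2]---so there is nothing in the text to compare your argument against; what you have written is the standard Young-integration proof, and it is correct. Your exponent bookkeeping checks out: $p<1/(1-H)$ gives $1-\tfrac{1}{p}<H$, so picking $r\in\left(1-\tfrac{1}{p},H\right)$ and $q=1/r$ yields simultaneously $q>1/H$ (hence $V_q(B^H,[0,T])<\infty$ a.s., consistent with the paper's lemma that the variation index of $B^H$ is $1/H$, or directly from the \Holder\ continuity of order $\beta<H$ established earlier in the paper) and the Young condition $\tfrac{1}{p}+\tfrac{1}{q}>1$. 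Restricting to the full-measure event $\Omega_0$ and invoking the Love--Young maximal inequality then reduces everything to a deterministic statement, which is exactly how [26] proceeds. Two small points worth making explicit if you write this up: first, Young's theorem in the mesh-refinement sense requires that the integrand and integrator share no common discontinuities, which here is automatic because $B^H$ has continuous sample paths almost surely; second, since you are deferring the Love--Young inequality to a citation anyway, your proof is really a reduction of the theorem to Young's theorem plus the $q$-variation estimate for fBm, which is a perfectly reasonable division of labor and already more than the paper provides.
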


That is, with probability one, the integral given in Theorem \ref{RSexist} exists in the sense as defined above.  Moreover, it is important to note that in \eqref{fbmsymmetric}, if $f$ is a deterministic function with bounded variation, then the almost sure limit of Riemann sums gives us $\int^T_0 f(t) d^{\circ}B^H(t)$. Henceforth, since the sample paths of $B^H(t)$ are continuous, we can make use of the integration by parts formula.  Thus
\begin{align} \label{fbm-parts}
\int^T_0 f(t) d^{\circ}B^H(t)
& = f(T) B^H(T) - \int^T_0 B^H(s) df(s).
\end{align}
The convergence of the Riemann-Stieltjes integral on the right side of \eqref{fbm-parts} is the same as the convergence of the Riemann sums in the case when $f$ is deterministic in the symmetric integral.
 If we have two such stochastic processes as in Definition's \ref{symmetricfbm} and \ref{forwardfbm} , then the forward integral is related to the symmetric integral as defined earlier.

\begin{mydef}
Let $X(t)$ and $Y(t)$ be two continuous (respectively, locally bounded) processes. Their covariation $[X,Y]_t$ is defined to be 
\begin{align}
\displaystyle \lim_{\epsilon \rightarrow 0} \frac{1}{\epsilon} \int^t_0 \big(X(u + \epsilon) - X(u)\big)\big(Y(u + \epsilon) - Y(u)\big) du,
\end{align}
provided the limit exists in uniform convergence in probability.
\end{mydef}

\begin{proposition}
 $[1]$ Let $X(t)$ and $Y(t)$ be two continuous (respectively, locally bounded) processes. Then
\begin{align*}
\int^t_0 Y(u) d^{\circ}x(u) = \int^t_0 Y(u) d^{-}x(u) + [X, Y]_t
\end{align*}
holds provided that at least two of the three terms exist.
\end{proposition}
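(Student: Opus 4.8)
The plan is to prove the identity at the level of the $\epsilon$-regularizations that define the three objects, and then to pass to the limit in probability. For a fixed $\epsilon>0$ set
$$S_\epsilon=\frac{1}{2\epsilon}\int_0^t Y(u)\big[X(u+\epsilon)-X(u-\epsilon)\big]\,du,\qquad F_\epsilon=\frac{1}{\epsilon}\int_0^t Y(u)\big[X(u+\epsilon)-X(u)\big]\,du,$$
and
$$C_\epsilon=\frac{1}{\epsilon}\int_0^t\big(X(u+\epsilon)-X(u)\big)\big(Y(u+\epsilon)-Y(u)\big)\,du,$$
so that $\int_0^t Y\,d^{\circ}X$, $\int_0^t Y\,d^{-}X$ and $[X,Y]_t$ are the respective limits in probability of $S_\epsilon$, $F_\epsilon$ and $C_\epsilon$. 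The strategy is to derive an exact pre-limit relation of the form $S_\epsilon=F_\epsilon+\tfrac12 C_\epsilon+R_\epsilon$ with $R_\epsilon\to 0$; once this is in hand the asserted identity follows by taking limits, and the clause ``provided at least two of the three terms exist'' becomes automatic, since convergence of any two summands forces convergence of the third to the complementary value.

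First I would split the symmetric increment as $X(u+\epsilon)-X(u-\epsilon)=[X(u+\epsilon)-X(u)]+[X(u)-X(u-\epsilon)]$, so that $S_\epsilon=\tfrac12 F_\epsilon+\tfrac{1}{2\epsilon}\int_0^t Y(u)[X(u)-X(u-\epsilon)]\,du$. In the second, ``backward'' integral I would substitute $v=u-\epsilon$, rewriting it as $\tfrac{1}{2\epsilon}\int_{-\epsilon}^{t-\epsilon}Y(v+\epsilon)[X(v+\epsilon)-X(v)]\,dv$. Splitting the shifted weight as $Y(v+\epsilon)=Y(v)+[Y(v+\epsilon)-Y(v)]$ then separates this into a second copy of $\tfrac12 F_\epsilon$ and a cross term whose integrand is precisely $[X(v+\epsilon)-X(v)][Y(v+\epsilon)-Y(v)]$, i.e. the covariation approximation. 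Reassembling the two copies of $\tfrac12 F_\epsilon$ into $F_\epsilon$ and identifying the cross term with $\tfrac12 C_\epsilon$ yields the sought relation, the remainder $R_\epsilon$ collecting only the discrepancy between $\int_0^t$ and the shifted range $\int_{-\epsilon}^{t-\epsilon}$.

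It then remains to show $R_\epsilon\to 0$ in probability. The change of variables leaves two boundary integrals, over $[-\epsilon,0]$ and $[t-\epsilon,t]$, each of length $\epsilon$; using the continuity (respectively local boundedness) of the trajectories of $X$ and $Y$, the integrand on each strip is dominated by $\sup|Y|$ times the modulus of continuity of $X$ at scale $\epsilon$, so that dividing by $\epsilon$ and integrating over a strip of length $\epsilon$ leaves a quantity tending to $0$ pathwise, hence in probability. Passing to the limit in $S_\epsilon=F_\epsilon+\tfrac12 C_\epsilon+R_\epsilon$ then exhibits the symmetric integral as the forward integral plus the covariation contribution, which is the content of the Proposition. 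I expect the main obstacle to be twofold: keeping the decomposition an exact identity for each fixed $\epsilon$, so that no limit is presumed to exist prematurely (this is exactly what powers the ``two of three'' argument), and respecting the fact that all convergences hold only in probability rather than almost surely or in $L^2$, so that the boundary remainder must be shown negligible in that weaker topology and the transfer of convergence among the three terms justified accordingly.
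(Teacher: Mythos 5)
The paper does not actually prove this proposition; it is quoted from reference [1] without argument, so there is no in-paper proof to compare against. Judged on its own, your regularization-level strategy is the standard and correct one: the splitting $X(u+\epsilon)-X(u-\epsilon)=[X(u+\epsilon)-X(u)]+[X(u)-X(u-\epsilon)]$, the shift $v=u-\epsilon$, the further splitting $Y(v+\epsilon)=Y(v)+[Y(v+\epsilon)-Y(v)]$, the estimate of the two boundary strips of length $\epsilon$ via $\sup|Y|$ times the modulus of continuity of $X$, and the observation that an exact pre-limit identity with a vanishing remainder makes the ``two of three'' clause automatic are all sound. (You silently use the sensible reading of the paper's forward integral, whose displayed formula has a spurious extra $1/\epsilon$, and you implicitly need a convention extending $X,Y$ to $[-\epsilon,0)$ and $(t,t+\epsilon]$; both are harmless.)

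The one genuine problem is the constant in front of the covariation. Your own computation — correctly — produces $S_\epsilon=F_\epsilon+\tfrac12 C_\epsilon+R_\epsilon$, hence in the limit
\begin{equation*}
\int_0^t Y(u)\,d^{\circ}X(u)=\int_0^t Y(u)\,d^{-}X(u)+\tfrac12\,[X,Y]_t,
\end{equation*}
whereas the proposition as displayed has $[X,Y]_t$ with no factor $\tfrac12$. With the definitions actually given in this paper (symmetric integral normalized by $\tfrac{1}{2\epsilon}$, covariation normalized by $\tfrac{1}{\epsilon}$ with no $\tfrac12$), the factor $\tfrac12$ is really there: take $X=Y=B$ a standard Brownian motion, for which the symmetric (Stratonovich) integral is $\tfrac12 B(t)^2$, the forward (\Ito) integral is $\tfrac12 B(t)^2-\tfrac t2$, and $[B,B]_t=t$. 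So either the displayed statement is missing a $\tfrac12$ (the likeliest explanation, as the $\tfrac12$ appears in the standard Russo--Vallois formulation), or you have proved a different identity than the one asserted. In any case you should not elide this by calling $\tfrac12 C_\epsilon$ ``the covariation contribution'' and declaring it the content of the Proposition; state explicitly which normalization you are using and reconcile the constant.
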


\subsection{On The Link Between Fractional and Stochastic Calculus} \label{frac-calc-link}

	Again, in this paper, we are interested in the relationship between fractional calculus and fractional Brownian motion. Indeed, this pathwise approach in defining the stochastic integral exhibits a strong link between the two.  To illustrate this link, we follow Z\"{a}hle in [30] and refer to this text for further details.  Consider two deterministic functions $h,g$ on $[0,T]$ which satisfy the conditions of Definition \ref{fractegral} and the fractal integral $\int^b_a h dg,$ $0 \leq a < b \leq T$ as defined in either case. Then, the following is valid:

$$\int^b_a h dg = \displaystyle \lim_{\epsilon \rightarrow 0} \int^b_a (I^{\epsilon}_{a+} h) dg.$$

If the degrees of differentiability of $h$ and $g$ sum to at least $1 - \epsilon,$ then by \eqref{fracibp} we obtain  ([1]):

$$\int^b_a (I^{\epsilon}_{a+} h) dg = \frac{1}{\Gamma(\epsilon)} \int^T_0 u^{\epsilon - 1} \int^b_a h(s) \frac{g_{b-}(s+u) - g_{b-}(s)}{u} ds du.$$
And hence
$$\int^b_a h dg = \displaystyle \lim_{\epsilon \rightarrow 0} \frac{1}{\Gamma(\epsilon)} \int^T_0 u^{\epsilon - 1} \int^b_a h(s) \frac{g_{b-}(s+u) - g_{b-}(s)}{u} ds du.$$

So, if we consider the stochastic process $f(t, \omega )$ and the fractional Brownian motion $B^H(t)$ in place of the deterministic functions $h$ and $g,$ we obtain the following definition.
\begin{mydef}
Let $\{f(t) ; t \in [0,T]\}$ be a stochastic process. The extended forward integral of $f(t)$ w.r.t. $B^H(t)$ is defined as

\begin{align}
\label{exforward}
\int^t_0 f(s) d^-B(s)
& = \displaystyle \lim_{\epsilon \rightarrow 0} \frac{1}{\Gamma(\epsilon)} \int^T_0 u^{\epsilon -1} \int^t_0 f(s) \frac{B^H(s+u) - B^H(s)}{s} ds du,
\end{align}

provided the limit exists in uniform convergence in probability.
\end{mydef}

As we derived \eqref{exforward} in the deterministic case, recall that we restricted the degrees of differentiability of $h$ and $g$ such that their sum is at least $1 - \epsilon.$ In the stochastic case, this is the same as restricting $f(t, \omega )$ and $B^H(t)$ to be {\Holder} continuous of order $\alpha$ and $\beta,$ respectively, such that $\alpha + \beta > 1.$ In this case, we get the existence of the Riemann-Stieltjes integral as defined above.

\subsection{{\Ito} Formula With Respect To Fractional Brownian Motion} \label{itoformfbm}

Due to this construction of the stochastic integral by taking advantage of the almost surely continuous sample paths of fBm, an analogue of {\Ito}'s formula exists by using the classical change of variables formula.

\begin{mydef}
Let $f(t)$ be a forward integrable process and let $\alpha(s)$ be a measurable process such that $\int^t_0 |\alpha(s)| ds < \infty$ almost surely for $t \geq 0.$ Then, for $t \geq 0,$the fractional forward process is 
\begin{align}
X(t) = x + \int^t_0 \alpha(s) ds + \int^t_0 f(s) d^- B^H(s).
\end{align}
We can also write $(7.8)$ as
$$d^- X(t) = \alpha(t) dt + f(t) d^- B^H(t),    X(0) = x.$$
\end{mydef}

\begin{theorem}
Let 
$$d^- X(t) = \alpha(t) dt + f(t) d^- B^H(t),    X(0) = x$$
Be a fractional forward process. Suppose $g(t,x) \in C^2$ and let $Y(t) = g(t, X(t)).$ Then, if $1/2 < H < 1,$ we get 
$$d^- Y(t) = \frac{\partial}{\partial t}g(t, X(t))dt +  \frac{\partial}{\partial x}g(t, X(t))d^-X(t).$$
\end{theorem}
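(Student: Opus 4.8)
The plan is to mimic the classical {\Ito} formula derivation via a second-order Taylor expansion along a partition, but exploiting that for $1/2 < H < 1$ the quadratic variation of $B^H$ vanishes, so that the would-be {\Ito} correction term $\tfrac12 g_{xx}\,(d^-X)^2$ is identically zero and only the first-order terms survive. Concretely, fix $t>0$, take partitions $0=t_0<\cdots<t_n=t$ with mesh tending to zero, set $\Delta t_i=t_{i+1}-t_i$ and $\Delta X_i=X(t_{i+1})-X(t_i)$, and telescope $Y(t)-Y(0)=\sum_i[g(t_{i+1},X(t_{i+1}))-g(t_i,X(t_i))]$. Since $g\in C^2$, Taylor's theorem expands each increment into first-order terms $g_t\,\Delta t_i+g_x\,\Delta X_i$ plus second-order terms involving $(\Delta t_i)^2$, $\Delta t_i\,\Delta X_i$ and $(\Delta X_i)^2$, with derivatives evaluated at intermediate points.

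First I would handle the first-order sums. The sum $\sum_i g_t(t_i,X(t_i))\,\Delta t_i$ is an ordinary Riemann sum converging to $\int_0^t g_t(s,X(s))\,ds$ by continuity of $s\mapsto g_t(s,X(s))$ along the almost surely continuous path. The sum $\sum_i g_x(t_i,X(t_i))\,\Delta X_i$ is, by construction, a left-endpoint Riemann--Stieltjes sum of $g_x$ against $X$; since $X$ has bounded $p$-variation with $p<1/(1-H)$, inherited from $B^H$ (note that $1/H<1/(1-H)$ precisely when $H>1/2$, so such a $p<2$ exists), Theorem \ref{RSexist} guarantees this converges to $\int_0^t g_x(s,X(s))\,d^-X(s)$, which is the forward integral appearing in the statement.

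The crux, and the main obstacle, is showing the second-order terms vanish in probability. The decisive input is that for $H>1/2$ the $p$-variation lemma gives $V_2(B^H,[0,T])=0$ (take $p=2$, so $pH>1$), i.e. $B^H$ has zero quadratic variation, which is exactly the feature underlying Theorem \ref{fbmnotsemi}. I would first transfer this to $X$: the drift $\int_0^{\cdot}\alpha\,ds$ has zero quadratic variation because $\sum_i(\int_{t_i}^{t_{i+1}}\alpha)^2\le \max_i(\int_{t_i}^{t_{i+1}}|\alpha|)\int_0^t|\alpha|\to 0$ by absolute continuity of the integral, while $\int_0^{\cdot}f\,d^-B^H$ inherits finite $p$-variation with some $p<2$ from $B^H$; a Cauchy--Schwarz bound on the cross term then yields $\sum_i(\Delta X_i)^2\to[X,X]_t=0$.

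Consequently $\sum_i \tfrac12 g_{xx}(\Delta X_i)^2\to0$ after bounding $g_{xx}$ uniformly along the compact image of the path (continuity of $g_{xx}$ and of the path); $\sum_i(\Delta t_i)^2\le(\text{mesh})\,t\to0$ kills the $g_{tt}$ term; and $\sum_i|\Delta t_i\,\Delta X_i|\le(\sum_i(\Delta t_i)^2)^{1/2}(\sum_i(\Delta X_i)^2)^{1/2}\to0$ by Cauchy--Schwarz kills the mixed term. The technical care lies in making each limit hold in the uniform-convergence-in-probability sense demanded by the forward integral and in controlling the Taylor remainder uniformly along the path; equivalently, one may route the entire argument through the covariation Proposition above, using $[X,X]_t=0$ to discard the correction and obtain the first-order change-of-variables formula directly.
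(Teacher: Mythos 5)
Your proposal follows essentially the same route as the paper's own proof: telescope $Y(t)-Y(0)$ over a partition, apply a second-order Taylor expansion of $g$, and discard the second-order terms because the quadratic variation of $X$ vanishes when $1/2<H<1$ (via $V_p(B^H,[0,T])=0$ for $pH>1$). You actually supply more justification than the paper does --- in particular the transfer of zero quadratic variation from $B^H$ to $X$ and the Cauchy--Schwarz control of the mixed and remainder terms, which the paper simply asserts --- so the proposal is correct and, if anything, more complete.
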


\begin{proof}
Let $\{ t_0, t_1, ... , t_{n-1}, t_n\}$ be a partition of $[0,t]$ such that $t_0 = 0$ and $t_n = t.$ Then by using a Taylor expansion of $g(t,x)$ we get\\

	$Y(t) - Y(0)$
	\begin{align*}
	& = \displaystyle \sum \limits_{j = 0}^{n-1} Y(t_{j+1}) - Y(t_j)\\
	& = \displaystyle \sum \limits_{j = 0}^{n-1} g(t_{j+1},X(t_{j+1})) - g(,t_j,X(t_j))\\
	& = \displaystyle \sum \limits_{j = 0}^{n-1} \frac{\partial}{\partial t}g(t_{j},X(t_{j})) \bigtriangleup 	t_j + \displaystyle \sum \limits_{j = 0}^{n-1}\frac{\partial}{\partial t}g(,t_j,X(t_j)) \bigtriangleup X(t_j)\\
	& + \frac{1}{2} \displaystyle \sum \limits_{j = 0}^{n-1}\frac{\partial^2}{\partial t^2}g(,t_j,X(t_j)) (\bigtriangleup X(t_j))^2 + \displaystyle \sum \limits_{j = 0}^{n-1} O((\bigtriangleup t_j)^2) + O((\bigtriangleup X(t_j))^2),
\end{align*}
where $\bigtriangleup X(t_j) =  X(t_{j+1}) - X(t_j)$ and $\bigtriangleup t_j = t_{j+1} - t_j.$ But, $1/2 < H < 1,$ hence the quadratic variation of $X(t)$ is zero. Thus, as $M_n$ tends to zero, we get
$$Y(t) - Y(0) = \int^t_0 \frac{\partial}{\partial s}g(s,X(s)) ds + \int^t_0 \frac{\partial}{\partial x}g(s,X(s)) d^-X(s).$$
\end{proof}

Unlike the case of the {\Ito} stochastic integral with respect to the Brownian motion, the pathwise attempt to define the integral with respect to fractional Brownian motion does not have zero mean.  Moreover, there is no easy formula for its variance, where in order to compute the mean and variance of this integral, we need the techniques of the Malliavin calculus, which we will not do here. Furthermore, when $H < 1/2,$ more care is needed in the construction of the pathwise definition of \eqref{fBmsi} as more difficulties arise. One example being, as mentioned earlier, that if $H < 1/2,$ the quadratic variation of $B^H(t)$ is infinite, and hence the limit in \eqref{fBmforward} does not even exist. For further details we refer to [1] and [14].

	We have shown that the well known Brownian motion is just a specific case of the fractional Brownian motion, when $H = 1/2$, thus splitting this generalization into three distinct families. We first described Brownian motion and its properties in order to develop a stochastic calculus with respect to $B(t)$. In this development, {\Ito} took advantage of certain properties, such as independent increments and $B(t)$ being a martingale. Because of this, it turns out that,  by the $Bichteler-Dellacherie$ $Theorem$, Brownian motion is a good integrator. Whereas this is not the case for fBm with values of $H$ not equal to $1/2$, since it is not a semimartingale. On this note, it is interesting to remark that Brownian motion is a better integrator (in the sense that $B^H(t)$ is a good integrator if $H = 1/2$ and is not a good integrator otherwise) than fBm even though sample paths of fBm, for values of $H \in (1/2 , 1)$, are smoother than sample paths of $B^{\frac{1}{2}}(t).$  Even still, in the midst of this, there have been many successful approaches in characterizing a stochastic integral with respect to fBm. In this paper we chose to discuss the pathwise approach for its interesting revelations on the link between fractional calculus and fractional Brownian motion.  We refer to Nualart in [23] and Z\"{a}hle in [30] for further details on this relationship.

\end{doublespace}
\pagebreak


\end{document}